\def\pmod #1{\ ({\rm{mod}}\ #1)}
\def\Z{\Bbb Z}
\def\N{\Bbb N}
\def\Q{\Bbb Q}
\def\R{\Bbb R}
\def\l{\left}
\def\r{\right}
\def\bg{\bigg}
\def\({\bg(}
\def\){\bg)}
\def\t{\text}
\def\f{\frac}
\def\ord{{\rm ord}}
\def\gen{{\rm gen}}
\def\Aut{{\rm Aut}}
\def\ls{\leqslant}
\def\gs{\geqslant}
\def\sm{\setminus}
\def\ve{\varepsilon}
\def\eq{\equiv}
\def\da{\delta}
\theoremstyle{plain}
\newtheorem{theorem}{Theorem}
\newtheorem{lemma}{Lemma}
\theoremstyle{definition}
\theoremstyle{remark}
\newtheorem{remark}{Remark}
\begin{document}
 \baselineskip=17pt
\hbox{Nanjing Univ. J. Math. Biquarterly 40 (2023), no.\,1, 54--71.}
\medskip
\title[]
{Arithmetic progressions represented by diagonal ternary quadratic forms}
\date{}
\author[Hai-Liang Wu and Zhi-Wei Sun] {Hai-Liang Wu and Zhi-Wei Sun}

\thanks{2020 {\it Mathematics Subject Classification}.
Primary 11E25; Secondary 11B25, 11E20.
\newline\indent {\it Keywords}. Arithmetic progressions, representations of integers, ternary quadratic forms.
\newline \indent The research was supported by the National Natural Science
Foundation of China (Grant No. 11971222), and the initial version was posted to arXiv in 2018 with the ID
				{\tt arXiv:1811.05855}.}

\address {(Hai-Liang Wu)   School of Science, Nanjing University of Posts and
Telecommunications, Nanjing 210023, People's Republic of China}
\email{\tt whl.math@smail.nju.edu.cn}

\address {(Zhi-Wei Sun) Department of Mathematics, Nanjing
University, Nanjing 210093, People's Republic of China}
\email{{\tt zwsun@nju.edu.cn}}

\begin{abstract} Let $d>r\gs0$ be integers. For positive integers $a,b,c$, if any term of the arithmetic progression
$\{r+dn:\ n=0,1,2,\ldots\}$ can be written as $ax^2+by^2+cz^2$ with $x,y,z\in\Z$, then the form $ax^2+by^2+cz^2$ is called $(d,r)$-universal.
In this paper, via the theory of ternary quadratic forms we study the $(d,r)$-universality of some diagonal ternary quadratic forms
conjectured by L. Pehlivan and K. S. Williams, and Z.-W. Sun.
For example, we prove that $2x^2+3y^2+10z^2$ is $(8,5)$-universal, $x^2+3y^2+8z^2$ and $x^2+2y^2+12z^2$ are $(10,1)$-universal and $(10,9)$-universal,
and $3x^2+5y^2+15z^2$ is $(15,8)$-universal.
\end{abstract}

\maketitle

\section{Introduction}
\setcounter{lemma}{0}
\setcounter{theorem}{0}
\setcounter{corollary}{0}
\setcounter{remark}{0}
\setcounter{equation}{0}
\setcounter{conjecture}{0}

Let $\N=\{0,1,2,\ldots\}$. The Gauss-Legendre theorem on sums of three squares states that
$\{x^2+y^2+z^2:\ x,y,z\in\Z\}=\N\sm\{4^k(8l+7):\ k,l\in\N\}$. A classical topic in the study of number theory asks, given a quadratic polynomial $f$ and an integer $n$, how can we decide when $f$ represents $n$ over the integers? This topic has been extensively investigated. It is known that
for any $a,b,c\in\Z^+=\{1,2,3,\ldots\}$ the exceptional set
$$E(a,b,c)=\N\sm\{ax^2+by^2+cz^2:\ x,y,z\in\Z\}$$
is infinite, see, e.g., \cite{DW}.

An integral quadratic form $f$ is called {\it regular} if it represents
each integer represented by the genus of $f$. L. E. Dickson \cite[pp. 112-113]{D39} listed all the $102$ regular ternary quadratic forms $ax^2+by^2+cz^2$ together with the explicit characterization of
$E(a,b,c)$, where
$1\ls a\ls b\ls c\in\Z^+$ and $\gcd(a,b,c)=1$. In this direction, W. C. Jagy, I. Kaplansky and A. Schiemann \cite{JKS} proved that there are at most $913$ regular positive definite integral ternary quadratic forms.

By the Gauss-Legendre theorem, for any $n\in\N$ we can write $4n+1=x^2+y^2+z^2$ with $x,y,z\in\Z$.
It is also known that for any $n\in\N$ we can write $2n+1$ as $x^2+y^2+2z^2$ (or $x^2+2y^2+3z^2$, or $x^2+2y^2+4z^2$) with $x,y,z\in\Z$ (see, e.g., Kaplansky \cite{odd}). Thus, it is natural to introduce the following definition.

{\it Definition} 1.1. Let $d\in\Z^+$ and $r\in\{0,\ldots,d-1\}$. For $a,b,c\in\Z$, if any $dn+r$ with $n\in\N$
can be written as $ax^2+by^2+cz^2$ with $x,y,z\in\Z$, then we say that the ternary quadratic form $ax^2+by^2+cz^2$
is $(d,r)$-{\it universal}.

In 2008, A. Alaca, S. Alaca and K. S. Williams \cite{AAW} proved that there is no binary  positive definite quadratic form  which can represent all nonnegative integers in a residue class.

Z.-W. Sun \cite{S15} proved that $x^2+3y^2+24z^2$ is $(6,1)$-universal. Moreover, in 2017 he \cite[Remark 3.1]{S17}
confirmed his conjecture that for any $n\in\Z^+$ and $\da\in\{0,1\}$ we can write
$6n+1$ as $x^2+3y^2+6z^2$ with $x,y,z\in\Z$ and $x\eq\da\pmod2$. This implies that
$4x^2+3y^2+6z^2$ and $x^2+12y^2+6z^2$ are $(6,1)$-universal.
On August 2, 2017, Sun \cite{S17o} published on OEIS his list (based on his computation)
of all possible candidates of $(d,r)$-universal
irregular ternary quadratic forms $ax^2+by^2+cz^2$ with $1\ls a\ls b\ls c$ and $3\ls d\ls 30$. For example,
he conjectured that
$$x^2+3y^2+7z^2,\ x^2+3y^2+42z^2,\ x^2+3y^2+54z^2$$
are all $(6,1)$-universal, $x^2+7y^2+14z^2$ is $(7,1)$-universal
and $x^2+2y^2+7z^2$ is $(7,r)$-universal for each $r=1,2,3$.
In 2018 L. Pehlivan and K. S. Williams \cite{PK} also investigated such problems independently, actually
they studied $(d,r)$-universal quadratic forms $ax^2+by^2+cz^2$ with $1\ls a\ls b\ls c$ and $3\ls d\ls 11$.

Pehlivan and Williams \cite{PK} considered the $(8,1)$-universality of  $x^2+8y^2+24z^2$, $x^2+2y^2+64z^2$ and $x^2+8y^2+64z^2$ open.
However, B. W. Jones and G. Pall \cite{Jones} proved in 1939 that for any $n\in\N$ we can write
$$8n+1=x^2+8y^2+64z^2=x^2+2(2y)^2+64z^2$$
with $x,y,z\in\Z$, and hence $x^2+2y^2+64z^2$ and $x^2+8y^2+64z^2$ are indeed $(8,1)$-universal.
As $8x(x+1)/2+1=(2x+1)^2$, the $(8,1)$-universality of $x^2+8y^2+24z^2$ yields $\{x(x+1)/2+y^2+3z^2:\ x,y,z\in\Z\}=\N,$
which was conjectured by Sun \cite{S07} and confirmed in \cite{GPS}.

The first part and part (ii) with $i\in\{2,3\}$ of the following result were conjectured by Pehlivan and Williams \cite{PK}, as well as Sun \cite{S17o}.

\begin{theorem}\label{Th B}
{\rm (i)} The form $2x^2+3y^2+10z^2$ is $(8,5)$-universal.

{\rm (ii)} Let $n\in\Z^+$, $\da\in\{1,9\}$ and $i\in\{1,2,3\}$. Then $10n+\delta=x_1^2+2x_2^2+3x_3^2$
for some $(x_1,x_2,x_3) \in \Z^3$ with $2\mid x_i$.
\end{theorem}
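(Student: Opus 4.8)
The plan is to treat both parts through the genus theory of positive definite ternary quadratic forms: each universality assertion is reduced to the statement that a suitable form represents the relevant arithmetic progression, and the work is then split into a local (genus) step and a global step. In the local step one verifies that every term of the progression is represented over $\Z_p$ for each prime $p$; in the global step one passes from the genus to the form itself. For a regular form the latter is free once the local step is done, since the explicit exceptional sets in Dickson's list are available; for an irregular form one must in addition control the square classes in which the form and its genus can disagree.

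For part (i), I would first record the $2$-adic constraints. Reducing $2x^2+3y^2+10z^2\eq 5\pmod 8$ forces $y$ odd and exactly one of $x,z$ odd, and this residue is indeed attained, so $8n+5$ is represented over $\Z_2$. The only other primes dividing $2\cdot 60$ are $3$ and $5$, and a short computation shows that the relevant reductions $2x^2+3y^2\pmod 5$ and $2x^2+z^2\pmod 3$ realize every residue class; hence $8n+5$ is represented by the genus of $2x^2+3y^2+10z^2$. It then remains to descend from the genus to the form. I would determine the class number of this genus: if it is one the form is regular and we are done, and otherwise I would compute the spinor exceptional square classes and check that the progression $8n+5$ avoids them, handling any intersection by an explicit search.

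For part (ii), the key preliminary observation is that $m=10n+\da$ is odd, so reducing $x_1^2+2x_2^2+3x_3^2\eq m\pmod 2$ gives $x_1+x_3$ odd, i.e. exactly one of $x_1,x_3$ is odd in every representation. (Note that $x^2+2y^2+3z^2$ is regular and already represents every $10n+\da$, so the genuine content here is the control of parity.) Consequently the three parity requirements become three honest universality statements: $2\mid x_1$ means $m=4u^2+2x_2^2+3x_3^2$; $2\mid x_2$ means $m=x_1^2+8w^2+3x_3^2$; and $2\mid x_3$ means $m=x_1^2+2x_2^2+12v^2$. Thus part (ii) reduces to showing that the three determinant-$24$ forms $4x^2+2y^2+3z^2$, $x^2+3y^2+8z^2$ and $x^2+2y^2+12z^2$ are each $(10,1)$- and $(10,9)$-universal. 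For each I would run the same two-step argument, checking local solvability at $2$, $3$, $5$ (all of which hold, since $m$ is odd and $m\eq 1,4\pmod 5$ are represented by each reduction) and then invoking regularity or a spinor analysis.

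The main obstacle is exactly this global step for the irregular forms on Sun's list: membership in the genus is necessary but not sufficient, and I expect the heart of the proof to be the spinor genus computation. Concretely, I would use the local spinor norm groups at $2,3,5$ to pin down the finitely many square classes that can be spinor exceptional for each genus, and then show that the square classes of $8n+5$ and of $10n+\da$ either miss these exceptional classes or meet them only in integers that a finite check confirms are represented by the given form. A convenient alternative, where it applies, is to pair the target form with a regular companion in the same genus and argue that on the progression the two represent the same integers; producing such a companion and verifying that the progression lies in the commonly represented square classes is, I anticipate, the most delicate part of the argument.
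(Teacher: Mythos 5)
Your local analysis and the reduction in part (ii) to the $(10,\delta)$-universality of $4x^2+2y^2+3z^2$, $x^2+3y^2+8z^2$ and $x^2+2y^2+12z^2$ are both correct, but the proposal stops exactly where the proof has to begin: the descent from ``represented by the genus'' to ``represented by the form.'' The genus of $2x^2+3y^2+10z^2$ has two classes, so regularity is not available, and the spinor route you fall back on is not guaranteed to close the gap: a two-class genus very often consists of a \emph{single} spinor genus, in which case there are no spinor exceptional square classes to compute and membership in the (spinor) genus still does not effectively imply representation by the form (the Duke--Schulze-Pillot theorem only gives this for sufficiently large integers, ineffectively). The paper's actual global step is the construction you only gesture at in your last sentence: it takes the second class $g(x,y,z)=3x^2+5y^2+5z^2+2yz-2zx+2xy$, exhibits the explicit substitution $f\bigl(\frac{x}{2}+y-z,\,y+z,\,\frac{x}{2}\bigr)=g(x,y,z)$, and shows that $g(x,y,z)\equiv 5\pmod 8$ forces $2\mid x$ and $y\not\equiv z\pmod 2$, so every representation of $8n+5$ by $g$ converts into one by $f$. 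Without producing such an identity (or a genuinely applicable spinor computation), part (i) is not proved.

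For part (ii) the paper takes a different and more economical route than your three-form reduction: it stays with the regular form $h=x^2+2y^2+3z^2$, shows (via a case analysis mod $5$, supported by two auxiliary lemmas guaranteeing a representation with $y^2+z^2\ne 0$ and, when needed, $5\nmid yz$) that one may choose a representation with $x-2y+4z\equiv 0\pmod 5$, and then applies an explicit transformation $(x,y,z)\mapsto(x^*,y^*,z^*)$ preserving $h$ that flips the parity of \emph{all three} coordinates simultaneously; between the two representations each coordinate is even in one of them, which gives all three cases $i\in\{1,2,3\}$ at once. Your reduction is logically valid but replaces one problem by three universality statements for irregular forms, each of which would again face the unresolved genus-to-form descent; the degenerate cases (e.g.\ $10n+\delta$ a perfect square, where the needed nondegenerate representation must be manufactured) are also not addressed in your sketch.
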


In the spirit of Sun \cite{List}, it is easy to see that Theorem 1.1(i) has the following equivalent form: For any $n\in\N$ there are $x,y,z\in\Z$ and $\da\in\{0,1\}$ such that $n=x(x+1)+3y(y+1)/2+5z(z+\delta)$.

Kaplansky \cite{odd} showed that there are at most 23 positive definite integral ternary quadratic forms that can represent all positive odd integers
 (19 for sure and 4 plausible candidates, see also Jagy \cite{Jagy96} for further progress).
  Using one of the 19 forms, we obtain the following result originally conjectured by Sun \cite{S17o}.

\begin{theorem}\label{Th C}
The forms $x^2+3y^2+14z^2$ and $2x^2+3y^2+7z^2$ are both $(14,7)$-universal.
\end{theorem}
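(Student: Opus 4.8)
The plan is to reduce both assertions to the classical fact, quoted in the introduction and due to Kaplansky \cite{odd}, that the form $x^2+2y^2+3z^2$ represents every positive odd integer (it is one of the nineteen ternary forms representing all positive odds, which is presumably the form alluded to in the statement). Since $14n+7=7(2n+1)$, it suffices to show that each of the two forms represents $7m$ for every positive odd integer $m$. I would look for an explicit linear substitution with integer coefficients that turns each form, after factoring out a $7$, into $7(x^2+2y^2+3z^2)$; then a representation $m=A^2+2B^2+3C^2$ immediately furnishes the desired representation of $7m$.

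To find the substitution for $2x^2+3y^2+7z^2$, I would first analyze the form modulo $7$: a representation of a multiple of $7$ forces $2x^2+3y^2\equiv0\pmod7$, i.e. $y\equiv\pm2x\pmod7$. Parametrizing this congruence by $y=2x+7t$ and completing the square in the resulting binary part, $2x^2+12xt+21t^2=2(x+3t)^2+3t^2$, isolates a factor of $7$. Concretely, setting $x=B-3C$, $y=2B+C$, $z=A$ should give the polynomial identity $2x^2+3y^2+7z^2=7(A^2+2B^2+3C^2)$, which I would confirm by direct expansion. Taking $m=A^2+2B^2+3C^2$ from the odd-universality of $x^2+2y^2+3z^2$ then yields $2x^2+3y^2+7z^2=7m$, as required.

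For $x^2+3y^2+14z^2$ I would run the same computation modulo $7$: here $x^2+3y^2\equiv0\pmod7$ gives $x\equiv\pm2y\pmod7$, and writing $x=2y+7s$ with $y^2+4ys+7s^2=(y+2s)^2+3s^2$ again extracts a factor of $7$. The resulting substitution $x=2A+3C$, $y=A-2C$, $z=B$ should satisfy $x^2+3y^2+14z^2=7(A^2+2B^2+3C^2)$, to be verified by expansion, and the conclusion follows exactly as before.

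I expect the only genuine obstacle to be discovering the two correct substitutions; once the congruence modulo $7$ pins down the linear relation among the variables and the companion binary form is diagonalized, everything collapses to a single polynomial identity. In particular, no genus theory or class-number computation is needed: I use only the algebraic (one-directional) implication that a representation of $m$ by $x^2+2y^2+3z^2$ produces one of $7m$ by each target form, together with the quoted fact that $x^2+2y^2+3z^2$ represents every positive odd integer.
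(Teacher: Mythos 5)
Your proof is correct and takes essentially the same approach as the paper: both write $14n+7=7(2n+1)$, represent $2n+1$ by an odd-universal ternary form, and transfer the representation via an explicit multiply-by-$7$ identity (your two identities do check out). The only difference is the auxiliary form — the paper uses Kaplansky's non-diagonal form $x^2+3y^2+2yz+5z^2$, while you use the diagonal form $x^2+2y^2+3z^2$, whose odd-universality is likewise quoted from Kaplansky in the paper's introduction.
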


Now we turn to study Sun's conjectural $(15,r)$-universality of some positive definite integral ternary quadratic forms.

\begin{theorem}\label{Th D}
{\rm (i)} For any $n\in\N$ and $i\in\{1,2,3\}$,
there exists $(x_1,x_2,x_3)\in\Z^3$ with $3\mid x_i$ such that $15n+5=2x_1^2+3x_2^2+5x_3^2$.
\medskip

{\rm (ii)} The form $x^2+y^2+15z^2$ is $(15,5r)$-universal for $r=1,2$, and $3x^2+3y^2+5z^2$ is $(15,5)$-universal.

{\rm (iii)} For any $r=1,2$, both $x^2+y^2+30z^2$ and $2x^2+3y^2+5z^2$ are $(15,5r)$-universal. Also,
the forms $x^2+6y^2+15z^2$ and $3x^2+3y^2+10z^2$ are $(15,10)$-universal.

{\rm (iv)} The form $x^2+2y^2+15z^2$ is $(15,3r)$-universal for each $r=1,2,3,4$, and the form $3x^2+5y^2+10z^2$ is $(15,3r)$-universal for $r=1,4$.
\end{theorem}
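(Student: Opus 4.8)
The common engine will be the Gauss--Legendre three--square theorem together with the genus theory of ternary forms: for each low--determinant form appearing here I would first describe the integers it represents from its genus (using Dickson's list of regular forms where the form is regular), and then refine this so as to control the divisibility of the representing variables. The plan is to treat Part~(i) as the technical core and to deduce the universality statements of (ii)--(iv) from it and from the three--square theorem by explicit linear substitutions.

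For Part~(i), scaling the variable required to be divisible by $3$ shows that the three cases $i=1,2,3$ are equivalent, respectively, to the representability of $15n+5$ by the forms $18x^2+3y^2+5z^2$, $2x^2+27y^2+5z^2$ and $2x^2+3y^2+45z^2$ (each of determinant $270$). A convenient preliminary is the observation that, since $15n+5\eq2\pmod3$, any representation $15n+5=2x_1^2+3x_2^2+5x_3^2$ forces $x_1^2+x_3^2\eq1\pmod3$, so that exactly one of $x_1,x_3$ is divisible by $3$; thus the cases $i=1$ and $i=3$ are the two natural alternatives, while the mere existence of a representation (reused in Part~(iii)) will come from the genus analysis. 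I would establish each of the three scaled forms by checking that $15n+5$ is represented over $\R$ and over every $\Z_p$ and then passing from the genus to the form. For $i=2$ it is cleaner to reduce directly: writing $x_2=3v$, one needs some $v$ for which $15n+5-27v^2$ is represented by the binary form $2x^2+5z^2$ of discriminant $-40$, which I would secure by a class--group argument, choosing $v$ so that the odd part of $15n+5-27v^2$ consists only of primes splitting in the appropriate way.

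Parts~(ii)--(iv) I would then derive by substitutions that exploit divisibility of one variable, using Part~(i) for the residue $15n+5$ and its analogues (proved the same way) for $15n+10$ and for $15n+3j$. After rescaling the divisible variable, a constrained representation of the progression value turns into a representation by the target form; since $15n+5$ and $15n+10$ behave differently modulo $3$ and modulo $5$, each of $x^2+y^2+15z^2$, $3x^2+3y^2+5z^2$, $x^2+y^2+30z^2$, $x^2+6y^2+15z^2$, $3x^2+3y^2+10z^2$, $x^2+2y^2+15z^2$ and $3x^2+5y^2+10z^2$ requires its own substitution and its own local check at $2,3,5$. Where the target form is irregular, the substitution is arranged so that the representation lands in the correct class of its genus.

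The main obstacle is precisely this passage from the genus to the individual form. Several of the forms here are irregular, so the clean principle ``locally represented everywhere $\Rightarrow$ represented'' fails, and one must either verify that the progression avoids the finitely many numbers represented by the genus but not by the form, or steer the representation into the right class of a two--class genus. Concretely, the hard work is producing, uniformly in $n$, the constrained representations behind Part~(i) (and its counterparts for (iii) and (iv)) --- in particular the existence of the auxiliary $v$ making $15n+5-27v^2$ representable by $2x^2+5z^2$ --- since this is exactly where the genus structure, rather than a finite congruence computation, must be invoked.
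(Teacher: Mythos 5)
Your proposal correctly identifies where the difficulty lies, but it does not overcome it, and the route you sketch has a genuine gap at its core. For Part (i) you reduce to representing $15n+5$ by the scaled forms $18x^2+3y^2+5z^2$, $2x^2+27y^2+5z^2$, $2x^2+3y^2+45z^2$ and then propose to pass ``from the genus to the form''; but these are exactly the forms for which that passage fails a priori (their genera are not one-class), and you give no mechanism for steering the representation into the right class. Likewise, your fallback for $i=2$ --- choosing $v$ so that the odd part of $15n+5-27v^2$ is represented by the binary form $2x^2+5z^2$ --- is not a finite congruence condition: representability by a fixed class in a class group of order $>1$ imposes conditions on \emph{every} prime factor of $15n+5-27v^2$, and there is no uniform-in-$n$ way, as stated, to produce such a $v$. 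Your mod-$3$ observation that exactly one of $x_1,x_3$ is divisible by $3$ is correct but only shows the two cases are complementary; it does not show either one is achievable.

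The paper avoids all of this by never working with the determinant-$270$ forms at all. The key device is the factorizations $15n+5r=5(3n+r)$ and $15n+3r=3(5n+r)$ combined with multiplication-by-$5$ (resp.\ by-$3$) identities applied to representations of $3n+r$ or $5n+r$ by \emph{regular} forms from Dickson's list. For instance, for (i) one writes $3n+1=r^2+s^2+6t^2$ (regular, so representability is immediate) and uses
$5(r^2+s^2+6t^2)=2(r\pm 3t)^2+3(r\mp 2t)^2+5s^2=2(s\pm 3t)^2+3(s\mp 2t)^2+5r^2$;
since exactly one of $r,s$ is divisible by $3$, a short mod-$3$ case check shows each of the three divisibility constraints $3\mid x_i$ can be met by picking the right identity and sign. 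Parts (ii)--(iv) are handled the same way, e.g.\ $5(x^2+y^2+3z^2)=(x+2y)^2+(2x-y)^2+15z^2$ and $3(x^2+2y^2+5z^2)=(x-2y)^2+2(x+y)^2+15z^2$, with the starting forms $x^2+y^2+3z^2$, $u^2+3v^2+3w^2$, $x^2+y^2+6z^2$, $2u^2+3v^2+3w^2$, $x^2+2y^2+5z^2$, $u^2+5v^2+10w^2$ all regular. No genus-to-class argument is needed anywhere in this theorem. To repair your proposal you would either need to supply these explicit identities (at which point the local analysis and class-group considerations become unnecessary) or actually carry out the class-by-class analysis you defer, which is substantially harder than the problem warrants.
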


\begin{theorem}\label{Th E}
{\rm (i)} The form $x^2+3y^2+5z^2$ is $(15,3r)$-universal for each $r=1,2,3,4$.
Also, $x^2+5y^2+15z^2$ is $(15,3r)$-universal for $r=2,3$.
\medskip

{\rm (ii)} The form $x^2+3y^2+15z^2$ is $(15,r)$-universal for each $r\in \{1,7,13\}$.
Also, the form $x^2+15y^2+30z^2$ is $(15,r)$-universal for $r=1,4$, and
the form $x^2+10y^2+15z^2$ is $(15,r)$-universal for all $r\in \{4,11,14\}$.
\medskip

{\rm (iii)} The form $3x^2+5y^2+6z^2$ is $(15,r)$-universal for each $r\in \{8,11,14\}$. Also, $3x^2+5y^2+15z^2$ and $3x^2+5y^2+30z^2$ are both $(15,8)$-universal.
\end{theorem}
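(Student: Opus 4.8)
The plan is to reduce each $(15,r)$-universality assertion in Theorem~\ref{Th E} to a representation problem for a binary form of \emph{idoneal} discriminant, where representability is decided by congruence conditions alone. For a form $f=ax^2+by^2+cz^2$ occurring in the statement and a target $N=15n+r$, I would fix the variable $z=z_0$ and seek to write $N=q(x,y)+cz_0^2$ with $q(x,y)=ax^2+by^2$. In each case the binary form $q$ has an idoneal discriminant: reducing $z$ gives $x^2+3y^2$ (discriminant $-12$) for $x^2+3y^2+5z^2$ and $x^2+3y^2+15z^2$; $x^2+5y^2$ ($-20$) for $x^2+5y^2+15z^2$; $x^2+10y^2$ ($-40$) for $x^2+10y^2+15z^2$; $x^2+15y^2$ ($-60$) for $x^2+15y^2+30z^2$; and $3x^2+5y^2$ ($-60$) for all three forms $3x^2+5y^2+cz^2$ in part~(iii). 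Since the convenient numbers $3,5,10,15$ are idoneal, each such $q$ is alone in its genus, so for $M>0$ the form $q$ represents $M$ if and only if $M$ lies in the genus of $q$, i.e.\ if and only if $M$ satisfies an explicit set of congruences modulo $2^{\al}3^{\be}5^{\ga}$.

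Granting this reduction, the task becomes: given $N=15n+r$, produce a $z_0\in\N$ with $cz_0^2\ls N$ such that $M:=N-cz_0^2$ meets the genus congruence conditions of $q$. Here the hypothesis $N\equiv r\pmod{15}$ is exactly what makes a good choice of $z_0$ available. First I would record, for each $q$, the precise local conditions at $2,3,5$ under which $q$ represents $M$, read off from the theory of binary quadratic genera. Then I would show that as $z_0$ ranges over a suitable residue system modulo a small integer, the residues of $M=N-cz_0^2$ sweep out those needed to land in the genus of $q$; selecting such a $z_0$ with in addition $cz_0^2\ls N$ settles all large $n$, and the finitely many remaining small $N$ are checked directly.

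The step I expect to be the main obstacle is controlling the $p$-adic valuations of $M=N-cz_0^2$ at the primes $p\in\{2,3,5\}$ dividing the discriminant of $q$. When $p\mid M$ the genus conditions for $q$ become sensitive both to $v_p(M)$ and to imprimitive representations (writing $M$ as a square times a primitively represented integer), so fixing $M$ modulo $15$ alone is not enough; one must choose $z_0$ to pin down $M$ modulo $15$ \emph{and} to regulate the powers of $2,3,5$ dividing it. Producing a single $z_0$ that satisfies all these constraints simultaneously, and checking that the resulting condition is met precisely for the residues $r$ listed in each part (and genuinely fails for the excluded residues), is the delicate case-by-case core of the argument. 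The several idoneal binary sections available for each form give useful latitude in this search, but the larger coefficients $c\in\{15,30\}$ in parts~(ii) and~(iii) shrink the range of admissible $z_0$ once $N$ is fixed, so it is there that the valuation bookkeeping will be heaviest.
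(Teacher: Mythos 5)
Your reduction fails at its very first step: it is not true that an idoneal (one-class-per-genus) binary form $q=ax^2+by^2$ represents a positive integer $M$ if and only if $M$ satisfies congruence conditions modulo $2^{\al}3^{\be}5^{\ga}$. One class per genus only converts \emph{primitive} representability into \emph{local} representability, and local representability of $M$ by a binary form at the primes dividing $M$ is a condition on the prime factorization of $M$, not a congruence on $M$. For instance $x^2+3y^2$ represents $M$ (with $\gcd(M,6)=1$) precisely when every prime $p\equiv 2\pmod 3$ divides $M$ to an even power; it fails to represent $55$ even though $55\equiv 1\pmod 3$. In fact the set of integers represented by any positive binary form has density zero (Landau), and the paper's own introduction cites the Alaca--Alaca--Williams theorem that no positive definite binary form represents all nonnegative integers in a residue class. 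Consequently no choice of $z_0$ in a fixed residue system can guarantee that $M=N-cz_0^2$ is represented by $q$: for each individual $z_0$ the ``bad'' $N$ form a positive-density set, and showing that the finitely many sections $z_0$ cannot all fail simultaneously for large $N$ is exactly the hard arithmetic problem, one that congruence bookkeeping cannot resolve. The ``valuation bookkeeping'' you flag as the delicate part is therefore not the obstacle; the obstacle is that the objects you are intersecting are not congruence classes at all.

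The paper's argument is genuinely ternary and avoids this trap. For each target form $f_1$ it first checks that $15n+r$ is represented by the \emph{genus} of $f_1$ (a true local computation), notes that the genus has exactly two classes with a representative $f_2$ of the other class, and then transfers any representation $15n+r=f_2(x,y,z)$ to one by $f_1$ via explicit integral change-of-variable identities such as (\ref{5.5})--(\ref{5.6}), valid under a congruence on $(x,y,z)$. The real work is showing that a representation by $f_2$ satisfying that congruence always exists; this is done with sign changes, with infinite-order isometries of the quadratic space (Lemmas \ref{Lem5.1}, \ref{Lem5.2}, \ref{Lem5.5}) to rule out the degenerate solutions, and with the Siegel--Minkowski count of Lemma \ref{Lem2.3} to handle the cases where $15n+r$ is (twice) a square. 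If you want to salvage a binary-section approach you would need, at minimum, to replace your congruence criterion by the correct factorization criterion and then prove a nontrivial statement about the multiplicative structure of the numbers $N-cz_0^2$; that is a different and substantially harder route.
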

\begin{remark}
Our proof of Theorem \ref{Th E} relies heavily on the genus theory of quadratic forms as well as
the Siegel-Minkowski formula.
\end{remark}

We will give a brief overview of the theory of ternary quadratic forms in the next section, and show
Theorem \ref{Th B}-\ref{Th E} in Sections 3-5 respectively.

\maketitle
\section{Some preparations}
\setcounter{lemma}{0}
\setcounter{theorem}{0}
\setcounter{corollary}{0}
\setcounter{remark}{0}
\setcounter{equation}{0}
\setcounter{conjecture}{0}

Let
\begin{equation}\label{2.1}
f(x,y,z)=ax^2+by^2+cz^2+ryz+szx+txy
\end{equation}
be a positive definite ternary quadratic form with integral coefficients. Its associated matrix is
$$A=\begin{pmatrix} 2a & t &s \\t & 2b &r \\ s & r &2c \end{pmatrix}.$$
The discriminant of $f$ is defined by $d(f):=\det(A)/2$.

The following lemma is a fundamental result on integral representations of quadratic forms (cf. \cite [pp.129]{C}).

\begin{lemma}\label{Lem2.1}
Let $f$ be a nonsingular integral quadratic form and let $m$ be a nonzero integer represented by $f$ over the real field $\R$
and the ring $\Z_p$ of $p$-adic integers for each prime $p$.
Then $m$ is represented by some form $f^*$ over $\Z$ with $f^*$ in the same genus of $f$.
\end{lemma}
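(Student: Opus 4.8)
The plan is to prove this local--global principle by passing from the form $f$ to its associated lattice $L$ in the rational quadratic space $V=L\otimes\Q$ with quadratic map $Q$, so that ``$f$ represents $m$'' means ``there is $v\in L$ with $Q(v)=m$,'' and the genus of $f$ is the family of lattices $L'\se V$ that are $\Z_p$-isometric to $L$ for every prime $p$ and share the signature of $L$ over $\R$. The first step is to produce a single \emph{global} vector of norm $m$: representability of $m$ over each $\Z_p$ forces representability over each $\Q_p$, and representability over $\R$ is assumed, so by the Hasse--Minkowski principle there is $x_0\in V$ with $Q(x_0)=m$. This $x_0$ is the vector I will try to fit into a lattice of the genus.

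Next I would compare $x_0$ with a local representative at each place. For a prime $p$ the hypothesis gives $x_p\in L_p$ with $Q(x_p)=m$, and over $\R$ it gives such a vector in $V_\infty$; since $x_0$ and $x_v$ are vectors of equal nonzero norm $m$ in the nondegenerate space $V_v$ (here $f$ nonsingular is used), Witt's extension theorem provides an isometry $\sigma_v\in O(V_v)$ carrying $x_0$ to $x_v$. The key observation is that the fixed vector $x_0\in V$ already lies in $L_p$ for all but finitely many $p$, so for those $p$ one may take $x_p=x_0$ and $\sigma_p=\mathrm{id}$. Thus $(\sigma_v)_v$ is trivial outside a finite set of places and defines an element of the adelic orthogonal group $O_{\mathbb A}(V)$.

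Finally I would build the patched lattice $L^*$ by prescribing its localizations $L^*_p=\sigma_p^{-1}(L_p)$; because $\sigma_p=\mathrm{id}$ for almost all $p$, these agree with $L_p$ outside a finite set and hence determine a genuine global lattice $L^*=V\cap\bigcap_p L^*_p$. Each $\sigma_p^{-1}$ is a $\Z_p$-isometry from $L_p$ onto $L^*_p$, so $L^*$ lies in the genus of $L$; and from $\sigma_p(x_0)=x_p\in L_p$ we get $x_0\in\sigma_p^{-1}(L_p)=L^*_p$ for every $p$, whence $x_0\in L^*$ with $Q(x_0)=m$. The form $f^*$ attached to $L^*$ is then in the genus of $f$ and represents $m$, as desired.

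I expect the main obstacle to lie not in any single deep input---Hasse--Minkowski for the rational representation and Witt's theorem for the local isometries are both standard---but in the local-to-global patching: one must check carefully that the chosen local isometries can be taken trivial at almost every place, so that they genuinely assemble into an adele, and that the lattice reconstructed from the prescribed localizations is well defined and really belongs to the genus of $f$. That bookkeeping, together with the smooth translation between the language of forms and the language of lattices, is where the real care is needed.
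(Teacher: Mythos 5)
The paper does not prove this lemma at all --- it is quoted as a known fact with a reference to Cassels, \emph{Rational Quadratic Forms}, p.~129 --- and your argument (Hasse--Minkowski to get a global vector $x_0$ with $Q(x_0)=m$, Witt's theorem at each place to move $x_0$ into $L_p$, triviality of the local isometries at almost all $p$, and the standard patching of prescribed localizations into a global lattice $L^*$ in the genus containing $x_0$) is precisely the classical proof given in such references. It is correct and complete in all essentials, so there is nothing to add beyond confirming that the bookkeeping points you flag (adelic admissibility of $(\sigma_v)_v$ and well-definedness of the patched lattice) are exactly the standard lemmas one quotes here.
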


Now, we introduce some standard notations in the theory of quadratic forms which can be found in \cite{C, Ki, Oto}.
For the positive definite ternary quadratic form $f$ given by (\ref{2.1}) and $n\in\N$, we write
 $$r(n,f):=|\{(x,y,z)\in \Z^3 : f(x,y,z)=n\}|$$
 (where $|S|$ denotes the cardinality of a set $S$), and let
  \begin{equation*}r(n,\gen(f)):=\(\sum_{f^*\in \gen(f)}\frac{1}{|\Aut(f^*)|}\)^{-1}\sum_{f^*\in
  \gen(f)}\frac{r(n,f^*)}{|\Aut(f^*)|},
  \end{equation*}
  where the summation is over a set of representatives of the classes in $\gen(f)$,
  and $\Aut(f^*)$ is the group of integral isometries of $f^*$.

We also need the following result obtained from the Siegel-Minkowski formula
and the knowledge of local densities.

\begin{lemma}{\rm (\cite[Lemma 4.1]{Wu-Sun})}\label{Lem2.3}
Let $f$ be a positive ternary quadratic form with discriminant $d(f)$. Let $m\in\{1,2\}$ and suppose that $m$ is represented by $\gen(f)$. Then, for each prime $p\nmid 2md(f)$, we have
\begin{equation}
\frac{r(mp^2,\gen(f))}{r(m,\gen(f))}=p+1-\left(\frac{-md(f)}{p}\right),
\end{equation}
where $(\f{\cdot}p)$ is the Legendre symbol.
\end{lemma}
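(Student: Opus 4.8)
The plan is to deduce the identity from the Siegel--Minkowski formula, which expresses the weighted representation number $r(n,\gen(f))$ as a product of local densities over all places of $\Q$. Concretely, one has $r(n,\gen(f))=\beta_\infty(n)\prod_q\beta_q(n)$, where $\beta_\infty$ is the archimedean density and $\beta_q$ is the $q$-adic density of representations of $n$ by $f$, the product running over all primes $q$; the exact normalization is immaterial, since every constant independent of $n$ cancels when we form the ratio $r(mp^2,\gen(f))/r(m,\gen(f))$. So the first step is to factor this ratio as $\frac{\beta_\infty(mp^2)}{\beta_\infty(m)}\prod_q\frac{\beta_q(mp^2)}{\beta_q(m)}$ and then argue that almost all factors are trivial.

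Next I would dispose of the places other than $p$. For a positive definite ternary form the archimedean density satisfies $\beta_\infty(n)=C\sqrt n$ for a constant $C$ depending only on $f$ (the rank-$3$ case of $\beta_\infty(n)\propto n^{k/2-1}$), so $\beta_\infty(mp^2)/\beta_\infty(m)=p$. For a finite prime $q\ne p$, multiplication by $p^2$ is multiplication by a square unit of $\Z_q$, hence preserves both the $q$-adic valuation and the square class of $m$; consequently $\beta_q(mp^2)=\beta_q(m)$ and the corresponding factor is $1$. This reduces everything to the single ratio $\beta_p(mp^2)/\beta_p(m)$, and to showing that $p\cdot\beta_p(mp^2)/\beta_p(m)=p+1-\left(\frac{-md(f)}{p}\right)$.

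The heart of the matter is thus the $p$-adic local density computation, which is the step I expect to be the main obstacle. Since $p\nmid 2d(f)$ the form $f$ is $\Z_p$-unimodular with $p$ odd, and since $p\nmid m$ the number $m$ is a $p$-adic unit. Writing $N_k(a)=|\{x\bmod p^k:\ f(x)\equiv a\pmod{p^k}\}|$ and $\beta_p(a)=\lim_k N_k(a)/p^{2k}$, I would invoke the standard count of solutions of a nondegenerate ternary form over $\F_p$: the value $0$ is attained $p^2$ times (one of them $x=0$), while a nonzero value $a$ is attained $p^2+p\left(\frac{-a\,d(f)}{p}\right)$ times. Hensel's lemma then gives $\beta_p(m)=1+\frac1p\left(\frac{-md(f)}{p}\right)$ at once. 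For $\beta_p(mp^2)$ I would split the representations of $mp^2$ according to whether $x$ is primitive mod $p$ or lies in $p\Z_p^3$: the imprimitive ones are exactly $p$ times the representations of $m$ and contribute $\beta_p(m)/p$, whereas the primitive ones are governed by the $p^2-1$ nonzero points of the cone $f\equiv0$ and, after Hensel lifting (using that $\nabla f$ is nonvanishing off the origin), contribute $(p^2-1)/p^2$. Combining these,
\begin{equation*}
\beta_p(mp^2)=\frac{p^2-1}{p^2}+\frac1p\,\beta_p(m),
\end{equation*}
and a short manipulation shows $p\cdot\beta_p(mp^2)/\beta_p(m)=p+1-\left(\frac{-md(f)}{p}\right)$ in both cases $\left(\frac{-md(f)}{p}\right)=\pm1$; multiplying by the archimedean factor $p$ then yields the claim. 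The delicate points to watch are the exact powers of $p$ in the primitive/imprimitive recursion (from passing between $x$ and $y=x/p$) and the verification that the discriminant appearing in the $\F_p$ solution count lies in the same square class as $d(f)$ modulo $p$, so that the Legendre symbol matches the one in the statement.
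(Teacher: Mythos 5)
Your derivation is correct, and it is essentially the intended one: the paper does not reprove this lemma but quotes it from \cite{Wu-Sun}, explicitly describing it as a consequence of the Siegel--Minkowski formula together with local density computations, which is exactly your route (the archimedean factor $p$, triviality at $q\ne p$, and the primitive/imprimitive splitting at $p$ all check out, including the final algebra $p\bigl(\frac{p^2-1}{p^2}+\frac{1}{p}\beta_p(m)\bigr)/\beta_p(m)=p+1-\bigl(\frac{-md(f)}{p}\bigr)$).
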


\maketitle
\section{Proof of Theorem 1.1}
\setcounter{lemma}{0}
\setcounter{theorem}{0}
\setcounter{corollary}{0}
\setcounter{remark}{0}
\setcounter{equation}{0}
\setcounter{conjecture}{0}

\begin{lemma}\label{Lem4.2}
For any $n\in\Z^+$ and $\delta\in\{1,9\}$, we can write $10n+\delta=x^2+2y^2+3z^2$ with $x,y,z\in\Z$ and $y^2+z^2\ne0$.
\end{lemma}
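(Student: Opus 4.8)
The plan is to reduce the statement to the case where $N:=10n+\delta$ is a perfect square and to settle that case by counting representations. Since $\delta\in\{1,9\}$, the number $N$ is odd, and by Kaplansky~\cite{odd} the form $g:=x^2+2y^2+3z^2$ represents every positive odd integer; hence $N$ is represented by $g$. If $N$ is not a perfect square, then every representation $N=x^2+2y^2+3z^2$ must have $(y,z)\ne(0,0)$, for otherwise $N=x^2$; so such $N$ are immediately handled. It remains to find a representation with $y^2+z^2\ne0$ when $N=m^2$. Here $n\ge1$ forces $m\ge7$, and $\delta\in\{1,9\}$ gives $N\equiv1$ or $9\pmod{10}$, so $\gcd(m,10)=1$ (in particular $m$ is odd and $5\nmid m$).

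For the square case I would first dispose of the pure powers of $3$ by an explicit identity: from $9=1^2+2\cdot2^2+3\cdot0^2$ one gets
\begin{equation*}
9^a=(3^{a-1})^2+2\,(2\cdot3^{a-1})^2+3\cdot0^2,
\end{equation*}
a representation with $y=2\cdot3^{a-1}\ne0$. If $m$ is not a power of $3$, then since $\gcd(m,10)=1$ it has a prime divisor $q\ge7$; write $m=q\,m'$. It now suffices to produce a representation of $q^2$ with $y^2+z^2\ne0$, because scaling $(x,y,z)\mapsto(m'x,m'y,m'z)$ then yields a representation of $m^2=m'^2q^2$ with $(m'y,m'z)\ne(0,0)$.

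To represent $q^2$ nontrivially I would count representations. The form $g$ has discriminant $d(g)=24$ and $\Aut(g)=\{\pm1\}^3$, so $|\Aut(g)|=8$; moreover $g$ is regular with a single class in its genus, which gives $r(n,g)=8\,r(n,\gen g)$ for every $n$, and in particular $r(1,\gen g)=\tfrac18 r(1,g)=\tfrac14$. Since $q\ge7$ satisfies $q\nmid 2\cdot1\cdot d(g)=48$, Lemma~\ref{Lem2.3} (with $m=1$, $p=q$) yields
\begin{equation*}
r(q^2,\gen g)=\left(q+1-\left(\frac{-24}{q}\right)\right)r(1,\gen g)\ge\frac{q}{4}\ge\frac74,
\end{equation*}
whence $r(q^2,g)=8\,r(q^2,\gen g)\ge14$. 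As the only representations of $q^2$ with $(y,z)=(0,0)$ are $(\pm q,0,0)$, there must exist one with $y^2+z^2\ne0$, completing the square case.

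I expect the main obstacle to be the structural input that $g=x^2+2y^2+3z^2$ is regular and has class number one, since this is precisely what converts the Siegel--Minkowski growth of $r(q^2,\gen g)$ furnished by Lemma~\ref{Lem2.3} into control of the true representation number $r(q^2,g)$; without a single-class genus one would have to isolate the contribution of $g$ from that of any companion form. A minor but related point is that Lemma~\ref{Lem2.3} is used only to pass from $1$ to $q^2$ for a single prime $q$ coprime to $d(g)$ --- which is why reducing to such a prime factor (rather than building $m^2$ up directly, where higher prime powers or the ramified prime $3\mid d(g)$ would violate the hypothesis $p\nmid 2md(f)$) makes the argument go through cleanly.
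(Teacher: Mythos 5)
Your proof is correct and follows essentially the same route as the paper's: non-squares are handled trivially, powers of $3$ by an explicit identity, and the remaining squares by using Lemma \ref{Lem2.3} at a prime factor coprime to $2d(g)$ to force more than the two trivial representations $(\pm q,0,0)$. You are somewhat more careful than the paper in making explicit the one-class-genus and $|\Aut(g)|=8$ facts needed to convert $r(q^2,\gen g)$ into $r(q^2,g)$, which the paper uses implicitly.
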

\begin{proof}
By \cite [pp.112--113]{D39} we can write $10n+\delta=x^2+2y^2+3z^2$ with $x,y,z\in\Z$; if $10n+\delta$ is not a square then $y^2+z^2$ is obviously nonzero.

Now suppose that $10n+\delta=m^2$ for some $m\in\N$. As $n>0$, we have $m>1$.

{\it Case} 1. $m$ has a prime factor $p>3$.

In this case, by Lemma \ref{Lem2.3} we have
\begin{equation*}
r(p^2,x^2+2y^2+3z^2)=2\l(p+1-\left(\frac{-6}{p}\right)\r).
\end{equation*}
Hence, $r(m^2,x^2+2y^2+3z^2)\gs r(p^2,x^2+2y^2+3z^2)>2$. Thus, for some $(r,s,t)\in\Z^3$ with
$s^2+t^2\ne0$ we have $10n+\delta=m^2=r^2+2s^2+3t^2$.

{\it Case} 2. $10n+\delta=m^2=3^{2k}$ with $k\in\Z^+$.

In this case,
\begin{equation*}
10n+\da=3^{2k}=(2\times3^{k-1})^2+2\times(3^{k-1})^2+3\times(3^{k-1})^2.
\end{equation*}

In view of the above, we have completed the proof.
\end{proof}
\medskip

\begin{lemma}\label{Lem4.3}
If $n=2x^2+3y^2>0$ with $x,y\in\Z$ and $5\mid n$,
then we can write $n=2u^2+3v^2$ with $u,v\in\Z$ and $5\nmid uv$.
\end{lemma}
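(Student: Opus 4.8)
The plan is to reduce everything modulo $5$ and then repair a ``bad'' representation by a Brahmagupta-type composition identity attached to the discriminant $-24$. First I would note that $5\mid 2x^2+3y^2$ forces $2x^2\equiv-3y^2\equiv2y^2\pmod5$, hence $x^2\equiv y^2\pmod5$; in particular $5\mid x\Leftrightarrow 5\mid y$. So either $5\nmid x$ and $5\nmid y$, in which case $(u,v)=(x,y)$ already has $5\nmid uv$ and we are done, or else $5\mid x$ and $5\mid y$. Thus the whole problem lives in the second case, where $25\mid n$ and $m:=n/25=2a^2+3b^2$ with $a=x/5$, $b=y/5$.

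The key device is the identity
\begin{equation*}
(X^2+6Y^2)(2\al^2+3\be^2)=2(X\al+3Y\be)^2+3(X\be-2Y\al)^2=2(X\al-3Y\be)^2+3(X\be+2Y\al)^2,
\end{equation*}
which says that the product of a value of the principal form $X^2+6Y^2$ and a value of $2\al^2+3\be^2$ is again a value of $2\al^2+3\be^2$ (both forms have discriminant $-24$). Since $25=1^2+6\cdot2^2$, taking $(X,Y)=(1,2)$ yields, for any representation $m=2\al^2+3\be^2$, the two representations $n=25m=2(\al+6\be)^2+3(\be-4\al)^2=2(\al-6\be)^2+3(\be+4\al)^2$. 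Reducing modulo $5$, the coordinates of the first are both $\equiv\al+\be$ and those of the second are both $\equiv\al-\be$ (up to sign); since $\al+\be\equiv\al-\be\equiv0\pmod5$ would force $\al\equiv\be\equiv0\pmod5$, at least one of these two representations has both coordinates coprime to $5$, provided $\al,\be$ are not both divisible by $5$.

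It remains to produce a representation $m=2\al^2+3\be^2$ in which $\al,\be$ are not both divisible by $5$, and here I would use strong induction on $n$ (equivalently, descent on the power of $5$ dividing the coordinates). If $5\nmid m$ then $(\al,\be)=(a,b)$ works, since $5\mid a$ and $5\mid b$ would give $25\mid m$; if instead $5\mid m$, then $0<m<n$ with $5\mid m$, so the induction hypothesis supplies a representation of $m$ with both coordinates coprime to $5$. Feeding this $(\al,\be)$ into the identity finishes the proof. The main obstacle is exactly the nested case $25\mid n$ together with $5\mid m$: a single application of the identity need not clear the factor $5$ from the coordinates, so the argument must iterate, and the descent is what guarantees termination. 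Verifying the composition identity and tracking the two sign choices modulo $5$ are the only computational points, and both become routine once the multiplier $25=1^2+6\cdot2^2$ is identified.
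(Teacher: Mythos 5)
Your proof is correct and takes essentially the same route as the paper's: both rest on the composition identity $25(2\al^2+3\be^2)=2(\al\pm 6\be)^2+3(4\al\mp\be)^2$ (your Brahmagupta identity specialized at $25=1^2+6\cdot 2^2$) together with the observation that the two sign choices cannot both yield coordinates divisible by $5$ unless $\al\equiv\be\equiv0\pmod 5$. The only difference is bookkeeping: you run a strong induction on $n$, dividing by $25$ at each stage, whereas the paper inducts on $\ord_5(\gcd(x,y))$, stripping one factor of $5$ at a time --- the same descent.
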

\begin{proof} We use induction on $k=\ord_5(\gcd(x,y))$, the $5$-adic order of the greatest common divisor of $x$ and $y$.

When $k=0$, the desired result holds trivially.

Now let $k\gs1$ and assume the desired result for smaller values of $k$.
Write $x=5^kx_0$ and $y=5^ky_0$, where $x_0$ and $y_0$ are integers not all divisible by $5$.
Then $x_0+6y_0$ or $x_0-6y_0$ is not divisible by $5$. Hence we may choose $\ve\in\{\pm1\}$ such that
$5\nmid x_0+6\ve y_0$. Set $x_1=5^{k-1}(x_0+6\ve y_0)$ and $y_1=5^{k-1}(4x_0-\ve y_0)$. Then $\ord_5(\gcd(x_1,y_1))=k-1$.
Note that \begin{align*}
5^{2k}(2x_0^2+3y_0^2)=5^{2k-2}(2(x_0+6\ve y_0)^2+3(4x_0-\ve y_0)^2)=2x_1^2+3y_1^2.
\end{align*}
So, applying the induction hypothesis we immediately obtain the desired result.
\end{proof}
\medskip

\noindent{\it Proof of Theorem} 1.1. (i) It is easy to see that $8n+5$ can be represented by the genus of
$f(x,y,z)=2x^2+3y^2+10z^2$. There are two classes in the genus of $f$, and the one not containing $f$ has the representative $g(x,y,z)=3x^2+5y^2+5z^2+2yz-2zx+2xy$.
It is easy to verify the following identity:
\begin{equation} \label{4.3}
f\l(\frac{x}{2}+y-z,\ y+z,\ \frac{x}{2}\r)=g(x,y,z).
\end{equation}

Suppose that $8n+5=g(x,y,z)$ for some $x,y,z\in\Z$. Then
$$1\equiv 8n+5=g(x,y,z)\equiv 3x^2+(y+z)^2+2x(y-z)\pmod 4.$$
 Hence $y\not\equiv z\pmod 2$ and $2\mid x$.
 In light of the identity (\ref{4.3}), $8n+5$ is represented by $f$ over $\Z$.

 By Lemma \ref{Lem2.1} and the above,  $8n+5$ can be represented by $2x^2+3y^2+10z^2$ over $\Z$.

(ii) Let $h(x,y,z)=x^2+2y^2+3z^2$.
By \cite [pp.112--113]{D39}, we can write $10n+\delta=h(x,y,z)$ for some $x,y,z\in\Z$.

We claim that there are $u,v,w\in\Z$ with $u-2v+4w\equiv 0\pmod 5$ such that $10n+\delta=h(u,v,w)$. Here we handle the case $\delta=1$. (The case $\da=9$ can be handled similarly.)

{\it Case} 1. $x^2\equiv -1\pmod 5$.

 It is easy to see that $y^2\equiv 0\pmod 5$ and $z^2\equiv -1\pmod 5$, or $y^2\equiv 1\pmod 5$ and $z^2\equiv 0\pmod 5$.
When $y^2\equiv 0\pmod 5$ and $z^2\equiv -1\pmod 5$, without loss of generality we may assume that $z\equiv x\pmod 5$ (otherwise, we may replace $z$ by $-z$). If $y^2\equiv 1\pmod 5$ and
$z^2\equiv 0\pmod 5$, then we simply assume  $y\equiv -2x\pmod 5$ without loss of generality. Note that our choice of $y$ and $z$ meets the requirement $x-2y+4z\eq0\pmod5$.

{\it Case} 2. $x^2\equiv 0\pmod 5$.

Clearly, we have $y^2\equiv -1\pmod 5$ and $z^2\equiv 1\pmod 5$. Without loss of generality, we may assume that $y\equiv 2z\pmod 5$ and hence $x-2y+4z\eq0\pmod5$.

{\it Case} 3. $x^2\equiv 1\pmod 5$.

Apparently, we have $y^2\equiv z^2\pmod 5$. By Lemmas \ref{Lem4.2} and \ref{Lem4.3}, we may simply assume that $5\nmid yz$.
When $y^2\equiv z^2\equiv x^2\equiv 1\pmod 5$, without loss of generality we may assume that $x\equiv y\equiv -z\pmod 5$.
If $y^2\equiv z^2\equiv (2x)^2\equiv -1\pmod 5$, then  we may assume that
$y\equiv z\equiv 2x\pmod 5$ without any loss of generality. So, in this case our choice of $y$ and $z$ also meets the requirement $x-2y+4z\eq0\pmod5$.

In view of the above analysis, we may simply assume $x-2y+4z\eq0\pmod5$ without any loss of generality. Note that
$h(x,y,z)=h(x^*,y^*,z^*)$, where
\begin{align*}z^*=&\frac{x-2y+4z}5\not\eq z\pmod2,
\\x^*=&2y-z+2z^*\not\eq x\pmod2,
\\y^*=&y-3z+3z^*\not\eq y\pmod2.
\end{align*}
So we have the desired result in part (ii) of Theorem \ref{Th B}.
\qed

\section{Proofs of Theorems 1.2-1.3}
\setcounter{lemma}{0}
\setcounter{theorem}{0}
\setcounter{corollary}{0}
\setcounter{remark}{0}
\setcounter{equation}{0}
\setcounter{conjecture}{0}

\noindent{\it Proof of Theorem} 1.2. By  \cite{odd}, we can write $2n+1=F(r,s,t)$ with
$r,s,t\in\Z$, where $F(x,y,z)=x^2+3y^2+2yz+5z^2$.
Since
$$
(2r-3t)^2+3(r+2t)^2+14s^2=7F(r,s,t)$$ and
$$2(s+3t)^2+3(2s-t)^2+7r^2=7F(r,s,t),$$
we see that $7(2n+1)$ is represented by the form $x^2+3y^2+14z^2$ as well as the form $2x^2+3y^2+7z^2$. \qed
\medskip

\noindent{\it Proof of Theorem} 1.3. (i) By \cite [pp.112--113]{D39}, we may write $3n+1=r^2+s^2+6t^2$ with
$r,s,t\in\Z$. One may easily verify the following identities:
\begin{align*}
5(r^2+s^2+6t^2)&=2(r\pm 3t)^2+3(r\mp 2t)^2+5s^2
\\&=2(s\pm 3t)^2+3(s\mp 2t)^2+5r^2.
\end{align*}
As exactly one of $r$ and $s$ is divisible by $3$, one of the the four numbers $r\pm2t$ and $s\pm2t$ is a multiple of $3$. This proves part (i) of Theorem 1.3.

(ii) Let $r\in\{1,2\}$. By \cite [pp.112--113]{D39}, for some $x,y,z\in\Z$ we have
$3n+r=x^2+y^2+3z^2$. Hence
\begin{equation*}
15n+5r=5(x^2+y^2+3z^2)=(x+2y)^2+(2x-y)^2+15z^2.
\end{equation*}

By \cite [pp.112--113]{D39}, we may write $3n+1=u^2+3v^2+3w^2$ with $u,v,w\in\Z$. Thus
\begin{equation*}
15n+5=5(u^2+3v^2+3w^2)=3(v+2w)^2+3(2v-w)^2+5u^2.
\end{equation*}

(iii) Let $r\in\{1,2\}$. By \cite [pp.112--113]{D39}, there are $u,v,w\in\Z$ such that
$3n+r=r^2+s^2+6t^2$. There are two classes in the genus of the form $x^2+y^2+30z^2$, and
the one not containing $x^2+y^2+30z^2$ has a representative $2x^2+3y^2+5z^2$. Since
\begin{align*}
15n+5r&=5(x^2+y^2+6z^2)
\\&=(x+2y)^2+(2x-y)^2+30z^2
\\&=2(x+3z)^2+3(x-2z)^2+5y^2,
\end{align*}
we see that $15n+5r$ is represented by $x^2+y^2+30z^2$ as well as $2x^2+3y^2+5z^2$.

By \cite [pp.112--113]{D39}, we can write $3n+2=2u^2+3v^2+3w^2$ with $u,v,w\in\Z$. There are two classes in the genus of
$x^2+6y^2+15z^2$, and the one not containing $x^2+6y^2+15z^2$ has a representative $3x^2+3y^2+10z^2$. As
\begin{align*}
15n+10&=5(2u^2+3v^2+3w^2)
\\&=(2u+3v)^2+6(u-v)^2+15w^2
\\&=3(u+2v)^2+3(2v-w)^2+10u^2,
\end{align*}
we see that $15n+10$ is represented by $x^2+6y^2+15z^2$ as well as $3x^2+3y^2+10z^2$.

(iv) Let $r\in\{1,2,3,4\}$. By \cite [pp.112--113]{D39}, there are $x,y,z\in\Z$ such that
$5n+r=x^2+2y^2+5z^2$. Hence
\begin{equation*}
15n+3r=3(x^2+2y^2+5z^2)=(x-2y)^2+2(x+y)^2+15z^2.
\end{equation*}

Now let $r\in\{1,4\}$.  By \cite [pp.112--113]{D39},
there are $u,v,w\in\Z$ such that $5n+r=u^2+5v^2+10w^2$. Thus, \begin{equation*}
15n+3r=3(u^2+5v^2+10w^2)=3u^2+5(v-2w)^2+10(v+w)^2.
\end{equation*}

In view of the above, we have completed the proof of Theorem 1.3. \qed

\section{Proof of Theorem 1.4}
\setcounter{lemma}{0}
\setcounter{theorem}{0}
\setcounter{corollary}{0}
\setcounter{remark}{0}
\setcounter{equation}{0}
\setcounter{conjecture}{0}

\noindent{\it Proof of Theorem} 1.4(i). Let $r\in\{1,2,3,4\}$. It is easy to see that $15n+3r$ can be represented by
$f_1(x,y,z)=x^2+3y^2+5z^2$ locally. There are two classes in the genus of $f_1$,
and the one not containing $f_1$ has a representative $f_2(x,y,z)=x^2+2y^2+8z^2-2yz$. One may easily verify
the following identities:
\begin{align}
f_1\l(\frac{x-y-z}{3}-2z,\ \frac{x-y-z}{3}+y,\ \frac{x-y-z}{3}+z\r)=&f_2(x,y,z)\label{5.1},
\\f_1\l(\frac{x+y+z}{3}+2z,\ \frac{x+y+z}{3}-y,\ \frac{x+y+z}{3}-z\r)=&f_2(x,y,z)\label{5.2}.
\end{align}

Suppose that $15n+3r=f_2(x,y,z)$ with $x,y,z\in\Z$. Then
$$x^2-(y+z)^2\eq f_2(x,y,z)\eq0\pmod3,$$
and hence $(x-y-z)/3$ or $(x+y+z)/3$ is an integer.
Therefore, by (\ref{5.1}), (\ref{5.2}) and Lemma \ref{Lem2.1}, we obtain that $x^2+3y^2+5z^2$ is
$(15,3r)$-universal.

Now let $r\in\{2,3\}$. One can easily verify that $15n+3r$ is represented by the genus of
$g_1(x,y,z)=x^2+5y^2+15z^2$. There are two classes in the genus of $g_1$, and the one not containing $g_1$ has
a representative $g_2(x,y,z)=4x^2+4y^2+5z^2+2xy$.
It is easy to verify the identity
\begin{equation}\label{5.3}
g_1\l(y+\frac{x+y\mp5z}{3},\ x-\frac{x+y\pm z}{3},\ \frac{x+y\pm z}{3}\r)=g_2(x,y,z).
\end{equation}
If $15n+3r=g_2(x,y,z)$ with $x,y,z\in\Z$, then
$$(x+y)^2-z^2\eq g_2(x,y,z)\eq0\pmod 3$$
Thus, with the help of (\ref{5.3}) and Lemma {\ref{Lem2.1}}, we obtain the desired result. \qed
\medskip

\begin{lemma}\label{Lem5.1} {\rm (Oh \cite{Oh})}
Let $V$ be a positive definite ternary quadratic space over $\Q$. For any isometry $T\in O(V)$ of infinite order,
$$V_T=\{x\in V: \text {there is a positive integer k such that}\  T^k(x)=x\}$$
is a subspace of $V$ of dimension one, and $T(x)=\det(T)x$ for any $x\in V_T$.
\end{lemma}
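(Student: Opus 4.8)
The plan is to pass from the rational space $V$ to its real completion and exploit the spectral theory of orthogonal matrices. Set $V_\R = V \otimes_\Q \R$, a genuine Euclidean $3$-space since $V$ is positive definite; the isometry $T$ extends $\R$-linearly to an orthogonal transformation of $V_\R$, i.e. a real orthogonal $3\times 3$ matrix. The observation that reduces the rational statement to a real computation is that for each $k$ the map $T^k - I$ is defined over $\Q$, so $\ker(T^k - I)$ is a $\Q$-subspace of $V$ whose dimension over $\Q$ equals the $\R$-dimension of the corresponding kernel in $V_\R$. Hence it suffices to understand the real fixed spaces of the powers of $T$.

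Next I would analyze the eigenvalues of $T$ on $V_\C = V \otimes_\Q \C$. Being orthogonal, $T$ is normal and therefore diagonalizable over $\C$ with eigenvalues on the unit circle; since $\dim V = 3$ and $T$ is real, these eigenvalues are $\la, \mu, \bar\mu$ with $\la$ real (so $\la = \pm 1$) and $|\mu| = 1$. Because $\mu\bar\mu = 1$, the product of the eigenvalues equals $\la$, whence $\det T = \la$; that is, the unique real eigenvalue of $T$ is exactly $\det T$. Moreover, if $\mu$ were a root of unity then all three eigenvalues would be roots of unity and some power of $T$ would be the identity, contradicting that $T$ has infinite order; so $\mu$ is \emph{not} a root of unity, and in particular $\mu \neq \pm 1$, so the $\la$-eigenspace is one-dimensional.

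Now I would identify $V_T$. Decomposing any vector along the eigenbasis, the condition $T^k x = x$ forces the $\mu$- and $\bar\mu$-components to vanish (since $\mu^k \neq 1$ for every $k \ge 1$) and imposes $\la^k = 1$ on the remaining component. As $\la^k = 1$ is solvable (take $k = 1$ if $\la = 1$ and $k = 2$ if $\la = -1$), every $\ker(T^k - I)$ is contained in the $\la$-eigenspace, while conversely that line is fixed by $T$ (if $\la = 1$) or by $T^2$ (if $\la = -1$). Therefore $V_T$ coincides with the single $\Q$-subspace $\ker(T - I)$ or $\ker(T^2 - I)$, which by the dimension comparison above is one-dimensional; and $T$ acts on it as multiplication by $\la = \det T$, giving both assertions.

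The bulk of the argument is routine linear algebra once this viewpoint is fixed. The point demanding care is the reduction step: one must check that taking kernels commutes with the extensions $\Q \subseteq \R \subseteq \C$, so that the real (or complex) eigenvalue count genuinely computes the rational dimension of $V_T$, and, relatedly, that $V_T$ — a priori only a union of the subspaces $\ker(T^k - I)$ — actually collapses to one of them and is thus honestly a subspace. The identity $\det T = \la$ forced by $\mu\bar\mu = 1$ is the crux that pins the fixed line to the $\det T$-eigenvalue.
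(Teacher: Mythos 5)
Your argument is correct. One point worth noting up front: the paper does not prove this lemma at all --- it is stated with an attribution to Oh \cite{Oh} and used as a black box --- so there is no in-paper proof to compare against; what you have supplied is a self-contained justification of the cited result. Your route (extend $T$ to the Euclidean space $V\otimes_{\Q}\R$, diagonalize the resulting orthogonal map over $\C$ with eigenvalues $\la,\mu,\ov{\mu}$, $\la=\pm1$, observe $\det T=\la\mu\ov{\mu}=\la$, and use that infinite order forces $\mu$ not to be a root of unity) is sound, and you correctly handle the two points that actually need care: that $V_T$, a priori a union of the kernels $\ker(T^k-I)$, collapses to the single rational subspace $\ker(T^2-I)$ because the $\mu$- and $\ov{\mu}$-components of any periodic vector must vanish; and that the $\Q$-dimension of this kernel equals its dimension after base change, so the complex eigenvalue count really does give $\dim_{\Q}V_T=1$. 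The identification $T|_{V_T}=\la=\det T$ then follows since $|\mu|^2=1$. The only implicit step you might make explicit is that $T$ is diagonalizable (it is orthogonal with respect to an orthonormal basis of the positive definite form, hence normal), which you do invoke when concluding that some power of $T$ would be the identity if all eigenvalues were roots of unity.
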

\begin{remark}\label{Rem5.1}
Unexplained notations of quadratic space can be found in \cite{C,Ki,Oto}.
\end{remark}

\begin{lemma}\label{Lem5.2}
Let $n\in\N$ and $r\in\{1,7,13\}$. If we can write $15n+r=f_2(x,y,z)=3x^2+4y^2+4z^2+2yz$ with $x,y,z\in\Z$,
then there are $u,v,w\in\Z$ with $u+2v-2w\not\equiv 0\pmod 3$ such that $15n+r=f_2(u,v,w)$.
\end{lemma}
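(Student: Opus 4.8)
The plan is to reduce everything modulo $3$, clear the generic situation with the integral automorphisms of $f_2$, and invoke Lemma \ref{Lem5.1} only for one stubborn configuration. First I would record the two congruences that control the problem: since $4\eq1$ and $2\eq-1\pmod3$,
\[
f_2(x,y,z)\eq y^2-yz+z^2\eq(y+z)^2\pmod3,\qquad x+2y-2z\eq x-(y-z)\pmod3 .
\]
Because $r\in\{1,7,13\}$ forces $15n+r\eq1\pmod3$, every representation $15n+r=f_2(x,y,z)$ satisfies $(y+z)^2\eq1$, i.e. $y+z\not\eq0\pmod3$. The target is a representation with $x+2y-2z\not\eq0\pmod3$.

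Next I would use the automorphism group. The form splits as $\langle3\rangle\perp q$ with $q(y,z)=4y^2+2yz+4z^2$, and $\pm(1,0,0)$ are the only vectors of value $3$, so every integral isometry respects the splitting; hence $\Aut(f_2)$ is generated by $(x,y,z)\mapsto(-x,y,z)$, $(x,y,z)\mapsto(x,-y,-z)$ and $(x,y,z)\mapsto(x,z,y)$, of order $8$. Running these over a given representation, the residue of $x+2y-2z$ takes every value in $\{\pm x\pm(y-z)\}\pmod3$, which contains a nonzero class unless $x\eq0$ and $y\eq z\pmod3$. So, after an automorphism, I may assume the stubborn case $3\mid x$ and $y\eq z\pmod3$ (whence $y\not\eq0\pmod3$ by the first step).

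Here the integral automorphisms are useless, since they preserve both congruences, so I would pass to the rational space $V=(\Q^3,f_2)$ and argue by contradiction. Suppose every $(u,v,w)\in\Z^3$ with $f_2(u,v,w)=15n+r$ lies in the index-$3$ sublattice $\Lambda=\{(u,v,w):u+2v-2w\eq0\ (\mo3)\}$. I would construct an isometry $T\in O(V)$ of infinite order carrying $\Lambda$ into $\Z^3$. Then each iterate $T^k(v_0)$ of the stubborn representation $v_0$ is again an integral representation (its predecessor lies in $\Lambda$ by assumption, so $T$ sends it into $\Z^3$), hence lies in $\Lambda$; since $15n+r$ has only finitely many representations, the orbit is finite and $v_0\in V_T$. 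The same holds for $\sigma_3(v_0)=(x,z,y)$ and $\sigma_2(v_0)=(x,-y,-z)$, both of which lie in $\Lambda$. By Lemma \ref{Lem5.1} the space $V_T$ is one-dimensional, yet $v_0$ together with one of $(x,z,y)$, $(x,-y,-z)$ is non-proportional unless $x=0$ and $y=z$ --- a contradiction that forces some representation out of $\Lambda$, as desired.

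The main obstacle is the construction of the infinite-order $T$ with $T(\Lambda)\se\Z^3$. A $T$ fixing the $(1,0,0)$-axis reduces to an element of the finite group $O(q,\Z)$ (for every $(y,z)\in\Z^2$ some $(x,y,z)$ lies in $\Lambda$, so integrality on $\Lambda$ already forces integrality of the binary part), so it cannot have infinite order; thus $T$ must genuinely mix the $x$-coordinate with the block $q$ while staying integral on $\Lambda$. I would build such a $T$ from the infinite group $SO(q,\Q)$ of rational rotations (norm-one elements of $\Q(\sqrt{-15})$), corrected so that its $3$-adic denominators are absorbed by the defining congruence of $\Lambda$ and its axis $V_T$ avoids $\langle v_0\rangle$. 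Finally, the degenerate configuration $x=0$, $y=z$, where $15n+r=10y^2$ and the automorphism orbit is already collinear, I would settle by a direct representation count.
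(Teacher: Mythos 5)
Your overall framework is the same as the paper's (reduce to Oh's result, Lemma \ref{Lem5.1}, plus finiteness of representations), and your preliminary reduction via $\Aut(f_2)$ to the case $3\mid x$, $y\equiv z\pmod 3$ is correct. But there is a genuine gap at exactly the point you flag as ``the main obstacle'': the infinite-order isometry $T\in O(V)$ with $T(\Lambda)\subseteq\Z^3$ is never constructed, and it is the entire content of the proof. Your sketch --- start from a norm-one element of $\Q(\sqrt{-15})$ acting on the binary block $q$ and ``correct'' it so that its $3$-adic denominators are absorbed by the congruence defining $\Lambda$ while keeping the axis away from $\langle v_0\rangle$ --- is a plan, not an argument; nothing guarantees a priori that such a correction exists, and proving existence abstractly (e.g.\ via local--global considerations) would itself be a substantial piece of work. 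The paper resolves this by simply exhibiting $T$: the identity
\begin{equation*}
f_2\left(\frac{x+2y-2z}{3},\ -x+z+\frac{x+2y-2z}{3},\ x+y-\frac{x+2y-2z}{3}\right)=f_2(x,y,z)
\end{equation*}
defines a rational isometry of infinite order with $V_T=\{(0,t,t):t\in\Q\}$, and every coordinate of the image has numerator $\equiv\pm(x+2y-2z)\pmod 3$, so $T$ maps $\Lambda$ into $\Z^3$. Until you produce such an explicit $T$ (or a genuine existence proof), your argument does not close.

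Two smaller remarks. First, once the paper's $T$ is in hand, your automorphism reduction and your ``two non-proportional fixed vectors versus $\dim V_T=1$'' endgame are both unnecessary: since $f_2(0,t,t)=10t^2$ is divisible by $5$ while $15n+r$ is not, no representation lies in $V_T$, so the orbit $\{T^k(v_0)\}$ is already infinite and contradicts finiteness directly, with no case analysis. Second, your closing ``direct representation count'' for the configuration $x=0$, $y=z$ is not needed for the same reason: $f_2(0,y,y)=10y^2\equiv 0\pmod 5$ can never equal $15n+r$ with $r\in\{1,7,13\}$.
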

\begin{proof}
Suppose that every integral solution of the equation $f_2(x,y,z)=15n+r$ satisfies $x+2y-2z\equiv 0\pmod 3$.
We want to deduce a contradiction.

Let
$$T=\begin{pmatrix} 1/3 & 2/3 &-2/3 \\-2/3 & 2/3 &1/3 \\ 2/3 & 1/3 &2/3 \end{pmatrix},$$
and let $V$ be the quadratic space corresponding to $f_2$.
Since
\begin{equation}\label{5.4}
f_2\l(\frac{x+2y-2z}{3}, -x+z+\frac{x+2y-2z}{3}, x+y-\frac{x+2y-2z}{3}\r)=f_2(x,y,z),
\end{equation}
we have $T\in O(V)$.
One may
easily verify that the order of $T$ is infinite and the space $V_T$ defined in Lemma \ref{Lem5.1} coincides with $\{(0,t,t): t\in\Q\}$.
As $15n+r\ne f_2(0,t,t)$ for any $t\in\Z$, we have $15n+r=f_2(x_0,y_0,z_0)$ for some
$(x_0,y_0,z_0)\in\Z^3\sm V_T$. Clearly, the set $\{T^k(x_0,y_0,z_0): k\gs0\}$ is infinite and its elements are solutions to the equation $f_2(x,y,z)=15n+r$.
This leads a contradiction since the number of integral representations of any integer by a positive
quadratic forms is finite.
\end{proof}

\begin{lemma}\label{Lem5.3} {\rm (Jagy \cite{Jagy14})} If $n=2x^2+2xy+3y^2$ $(x,y\in\Z)$ is a positive integer divisible by $3$, then there are $u,v\in\Z$
with $3\nmid uv$ such that $n=2u^2+2uv+3v^2$.
\end{lemma}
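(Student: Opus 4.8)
The plan is to prove the statement by a descent on $n$ resting on two polynomial identities for the form $Q(x,y):=2x^2+2xy+3y^2$. First I would record the local picture modulo $3$: since $Q(x,y)\eq 2x(x+y)\pmod3$, a representation $n=Q(x,y)$ with $3\mid n$ satisfies either $3\mid x$, or else $3\nmid x$ and $3\nmid y$ (because then $x\eq-y\not\eq0\pmod3$), in which latter case $(x,y)$ is already a representation with $3\nmid xy$ and we are done. Thus the whole difficulty is concentrated in the case $3\mid x$.

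The two identities I would verify by direct expansion are
\begin{equation*}
Q(x_0-y,\ 2x_0+y)=Q(3x_0,\ y)
\end{equation*}
and
\begin{equation*}
Q(p-3q,\ 2p+3q)=9\,Q(p,q),
\end{equation*}
valid for all integers. The first is a value-preserving ``rotation'' available precisely when $3\mid x$ (one obtains it by substituting $x=3x_0$ in the second and dividing by $9$); the second is a lifting map that multiplies the represented value by $9$ while sending the residues $(p,q)\bmod3$ to $(p,-p)\bmod3$.

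Then I would argue by strong induction on $n$. Given $n=Q(x,y)$ with $3\mid n$, if $3\nmid xy$ we are done; otherwise $3\mid x$, say $x=3x_0$, and the first identity furnishes a representation $n=Q(x_0-y,\,2x_0+y)$. A short congruence check shows both new coordinates are divisible by $3$ exactly when $x_0\eq y\pmod3$, and both coprime to $3$ otherwise; in the latter subcase we have produced the desired representation. In the former subcase both coordinates are multiples of $3$, so $9\mid n$ and $m:=n/9$ is represented by $Q$. I would then extract from $m$ a representation $(p,q)$ with $3\nmid p$: if $3\nmid m$ the reduced representation already has $3\nmid p$ (since $3\mid p$ would force $3\mid Q(p,q)=m$), while if $3\mid m$ the induction hypothesis applied to $m<n$ gives a representation of $m$ with both coordinates coprime to $3$. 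Feeding $(p,q)$ into the second identity yields $n=Q(p-3q,\,2p+3q)$ with both coordinates coprime to $3$, completing the induction.

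The main obstacle is conceptual rather than computational: unlike the analogous result for $2x^2+3y^2$ (cf. Lemma \ref{Lem4.3}), a pure reduction of $\ord_3(\gcd(x,y))$ does not suffice here, because the cross term $2xy$ lets $3\mid Q$ hold with $3\mid x$ but $3\nmid y$, so a coordinate can be divisible by $3$ even when the pair is primitive. Overcoming this is exactly what the value-preserving rotation (the first identity) accomplishes; identifying it---equivalently, spotting the norm-one element $(2+\sqrt{-5})/3$ of $\Q(\sqrt{-5})$ underlying the relation $2Q(x,y)=(2x+y)^2+5y^2$---is the crux, after which the bookkeeping of the descent is routine.
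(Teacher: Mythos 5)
Your proof is correct, and it is worth noting that the paper itself gives no proof of this lemma at all: it is simply attributed to Jagy's unpublished lecture notes, so you are supplying an argument where the authors rely on a citation. I checked both identities by expansion: $Q(x_0-y,2x_0+y)=18x_0^2+6x_0y+3y^2=Q(3x_0,y)$ and $Q(p-3q,2p+3q)=9Q(p,q)$, and the congruence bookkeeping is right ($Q(x,y)\eq 2x(x+y)\pmod 3$ forces $3\mid x$ in the bad case; the rotated coordinates satisfy $a\eq-b\pmod 3$, so they are both units or both multiples of $3$; the lifted coordinates are $\eq p,-p\pmod 3$). The strong induction is well-founded since each recursive call is to $n/9<n$, and the case $3\nmid m$ is correctly disposed of without the inductive hypothesis. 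Your closing remark is also apt: the paper's own Lemma \ref{Lem4.3} (the analogue for $2x^2+3y^2$ modulo $5$) is proved by reducing $\ord_5(\gcd(x,y))$, and that mechanism genuinely fails here because the cross term allows $3\mid x$, $3\nmid y$ in a primitive representation; elsewhere the paper escapes similar traps with the infinite-order-isometry device of Lemma \ref{Lem5.1} (as in Lemmas \ref{Lem5.2} and \ref{Lem5.5}), whereas your explicit value-preserving rotation plus descent is more elementary and fully constructive.
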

\medskip

The following lemma is a known result, see, e.g.,  \cite{Jagy14,Jones,Oh,S17}.

\begin{lemma}\label{Lem5.4}
If $n=x^2+y^2$ $(x,y\in\Z)$ is a positive integer divisible by $5$,
then $n=u^2+v^2$ for some $u,v\in\Z$ with $5\nmid uv$.
\end{lemma}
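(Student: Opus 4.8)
The plan is to mimic the inductive strategy already used for Lemma \ref{Lem4.3} (and for Lemma \ref{Lem5.3}): run an induction on $k=\ord_5(\gcd(x,y))$, the $5$-adic order of the greatest common divisor of $x$ and $y$, and use a norm-preserving ``rotation'' in the Gaussian integers to lower $k$ by one while keeping the represented value equal to $n$.

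First I would dispose of the base case $k=0$. Here $x$ and $y$ are not both divisible by $5$, and combined with $5\mid x^2+y^2$ this already forces $5\nmid xy$: if, say, $5\mid x$, then $5\mid x^2+y^2$ gives $5\mid y^2$ and hence $5\mid y$, contradicting $k=0$. So one may simply take $u=x$ and $v=y$.

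For the inductive step ($k\gs1$) I would write $x=5^kx_0$ and $y=5^ky_0$ with $x_0,y_0$ not both divisible by $5$, and use the identity
$$(3x_0-4\ve y_0)^2+(4\ve x_0+3y_0)^2=25(x_0^2+y_0^2)\qquad(\ve\in\{\pm1\}),$$
which is just the statement that multiplying $x_0+y_0i$ by the Gaussian integer $(2+\ve i)^2=3+4\ve i$ of norm $25$ preserves sums of two squares up to the factor $25$. Setting $x_1=5^{k-1}(3x_0-4\ve y_0)$ and $y_1=5^{k-1}(4\ve x_0+3y_0)$ then gives $x_1^2+y_1^2=5^{2k}(x_0^2+y_0^2)=n$, with $\ord_5(\gcd(x_1,y_1))\gs k-1$.

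The one point requiring care, which I expect to be the main (though minor) obstacle, is choosing the sign $\ve$ so that in fact $\ord_5(\gcd(x_1,y_1))=k-1$, i.e. $5\nmid\gcd(3x_0-4\ve y_0,\,4\ve x_0+3y_0)$. Working modulo $5$, since $-4\eq1$ and $4\eq-1$, the two linear forms reduce to $3x_0+\ve y_0$ and $-\ve x_0+3y_0$; their coefficient matrix has determinant $9+\ve^2=10\eq0\pmod5$, so they are proportional mod $5$ and vanish simultaneously exactly when $y_0\eq2\ve x_0\pmod5$. Were this to hold for both $\ve=1$ and $\ve=-1$, subtracting would give $4x_0\eq0$ and hence $x_0\eq y_0\eq0\pmod5$, contradicting the choice of $x_0,y_0$. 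Thus some $\ve$ works, producing a representation $n=x_1^2+y_1^2$ with strictly smaller $5$-adic order of the gcd, and the induction hypothesis then finishes the proof. Everything apart from this sign selection is the same bookkeeping already carried out in Lemma \ref{Lem4.3}.
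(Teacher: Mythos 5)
Your proof is correct. Note that the paper does not actually prove Lemma \ref{Lem5.4}; it is quoted as a known result with references to the literature, so there is no in-paper argument to compare against. Your descent on $k=\ord_5(\gcd(x,y))$ via multiplication by the norm-$25$ Gaussian integer $3+4\ve i$ is the exact analogue of the paper's own proof of Lemma \ref{Lem4.3} (which uses the substitution $(x_0,y_0)\mapsto(x_0+6\ve y_0,\,4x_0-\ve y_0)$ for $2x^2+3y^2$), and all the details check out: the identity $(3x_0-4\ve y_0)^2+(4\ve x_0+3y_0)^2=25(x_0^2+y_0^2)$ is valid, the sign selection argument correctly rules out $y_0\eq 2\ve x_0\pmod 5$ holding for both signs, and the base case $k=0$ together with $5\mid x^2+y^2$ does force $5\nmid xy$. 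Your write-up is a welcome self-contained substitute for the citation.
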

\medskip

\noindent{\it Proof of Theorem} 1.4(ii). (a) For each $r\in\{1,7,13\}$, it is easy to see that $15n+r$ can be represented by the genus of $f_1(x,y,z)=x^2+3y^2+15z^2$. There are two classes in the genus of $f_1(x,y,z)$, and the one not containing $f_1$ has a representative $f_2(x,y,z)=3x^2+4y^2+4z^2+2yz$. One may easily verify the following identities:
\begin{align}
f_1\l(x-y+z,\ \frac{x-2y}{3}-z,\ \frac{x+y}{3}\r)=&f_2(x,y,z)\label{5.5},
\\f_1\l(x+y-z,\ \frac{x-2z}{3}-y,\ \frac{x+z}{3}\r)=&f_2(x,y,z)\label{5.6}.
\end{align}
Suppose that $15n+r=f_2(x,y,z)$ for some $x,y,z\in\Z$.
As $3\nmid y$ or $3\nmid z$, when $3\nmid x$ we may assume that $(x+y)(x+z)\eq0\pmod3$ (otherwise we may replace $x$ by $-x$)
without loss of generality. If $3\mid x$ and $y\not\equiv z\pmod 3$, then $3\mid yz$ and hence $(x+y)(x+z)\eq0\pmod3$.
In the remaining case $3\mid x$ and $y\equiv z\pmod 3$, we have $x+2y-2z\eq0\pmod3$; however, we may apply Lemma \ref{Lem5.2}
to choose integers $u,v,w\in\Z$ so that $15n+r=f_2(u,v,w)$ and $u+2v-2w\not\eq0\pmod3$.

In view of the above analysis, there always exist $u,v,w\in\Z$ with $(u+v)(u+w)\equiv0\pmod3$
such that
$15n+r=f_2(u,v,w)$. With the help of (\ref{5.5}), (\ref{5.6}), and Lemma \ref{Lem2.1}, we obtain the $(15,r)$-universality of
$x^2+3y^2+15z^2$.

(b) Let $r\in\{1,4\}$. One can easily verify that $15n+r$ can be represented by
$g_1(x,y,z)=x^2+15y^2+30z^2$ locally. There are two classes in the genus of $g_1$, and the one not containing
$g_1$ has a representative $g_2(x,y,z)=6x^2+9y^2+10z^2-6xy$.

Suppose that $15n+r=g_2(x,y,z)$ with $x,y,z\in\Z$.
Clearly $3\nmid z$.
Since $15n+r\ne10z^2$, by Lemma \ref{Lem5.3} we may  assume that $x$ and $y$ are not all divisible by $3$. Thus we just need
to consider the following two cases.

{\it Case} b1. $3\nmid x$.

When this occurs, without loss of generality, we may assume that $x\equiv -z\pmod 3$ (otherwise we may replace $z$
be $-z$). In view of the identity
\begin{equation}\label{5.7}
g_1\l(x-3y,\ \frac{x-2z}{3},\ \frac{x+z}{3}\r)=g_2(x,y,z),
\end{equation}
there are $x^*,y^*,z^*\in\Z$ such that $15n+\delta=g_1(x^*,y^*,z^*)$.

{\it Case} b2. $3\mid x$ and $3\nmid y$

In this case, with the help of the identity
\begin{equation*}
g_2(x-y,\ -y,\ z)=g_2(x,y,z),
\end{equation*}
we return to Case b1 since $x-y\not\eq0\pmod3$.

Now applying Lemma {Lem2.1} we immediately obtain the $(15,r)$-universality of $x^2+5y^2+30z^2$.

(c) For any $r\in\{4,11,14\}$, it is easy to verify that $15n+r$ can be
represented by $h_1(x,y,z)=x^2+10y^2+15z^2$ locally. There are two classes in the genus of $h_1$,
and the one not containing $h_1$ has a representative $h_2(x,y,z)=5x^2+5y^2+6z^2$.

Suppose that $15n+r=h_2(x,y,z)$ with $x,y,z\in\Z$. Since $15n+r\ne 6z^2$, by Lemma \ref{Lem5.4} and the symmetry of $x$ and $y$, we simply assume $5\nmid y$ without loss of generality.
We claim that we may adjust the signs
of $x,y,z$ to satisfy the congruence $(2x+y+2z)(x-2y-2z)\equiv0\pmod5$.

{\it Case} c1. $x^2\equiv y^2\pmod 5$.

Without loss of generality, we may assume $x\equiv y\equiv z\pmod 5$ if $x^2\equiv y^2\equiv z^2\pmod 5$,
and $x\equiv-y\equiv-2z\pmod 5$ if $x^2\equiv y^2\equiv -z^2\pmod 5$. So our claim holds in this case.

{\it Case} c2. $x^2\equiv -y^2\pmod 5$.

If $x^2\equiv -y^2\equiv z^2\pmod 5$, without loss of generality,
we may assume that $x\equiv -2y\equiv z\pmod 5$. If $x^2\equiv -y^2\equiv -z^2\pmod 5$, we may assume that $x\equiv-2y\equiv2z\pmod 5$ without loss of any generality.
Thus $x,y,z$ satisfy the desired congruence in our claim.

{\it Case} c3. $x^2\equiv 0\pmod 5$.

If $y^2\equiv z^2\pmod 5$, we may assume that $y\equiv -z\pmod 5$.
If $y^2\equiv -z^2\pmod 5$, without loss of generality, we may assume that $y\equiv -2z\pmod 5$. Clearly, our claim also holds in this case.

In view of the above analysis, there are $x,y,z\in\Z$ with $2x+y+2z\equiv 0\pmod 5$ or $x-2y-2z\equiv 0\pmod 5$ such that
$15n+r=h_2(x,y,z)$. One may easily verify the following identities
\begin{align}
h_1\l(x-2y,\ \frac{2x+y+2z}{5}-z,\ \frac{2x+y+2z}{5}\r)=&h_2(x,y,z)\label{5.8},
\\h_1\l(x+2y,\ \frac{x-2y-2z}{5}+z,\ \frac{x-2y-2z}{5}\r)=&h_2(x,y,z)\label{5.9}.
\end{align}
With the help of (\ref{5.8}), (\ref{5.9}) and Lemma \ref{Lem2.1}, the $(15,r)$-universality of
$x^2+10y^2+15z^2$ is valid.\qed
\medskip

\begin{lemma}\label{Lem5.5}
Let $n\in\N$ and $g_2(x,y,z)=2x^2+8y^2+15z^2-2xy$.
Assume that $15n+8=g_2(x,y,z)$ for some $x,y,z\in\Z$ with
$y^2+z^2\ne0$. Then $15n+8=g_2(u,v,w)$
for some $u,v,w\in\Z$ with $3\nmid v+w$.
\end{lemma}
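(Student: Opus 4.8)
The plan is to argue by contradiction, in the spirit of Lemma \ref{Lem5.2}. Suppose that every integral representation $15n+8=g_2(x,y,z)$ satisfies $y+z\equiv0\pmod3$. I will then produce an isometry of the quadratic space attached to $g_2$ which, under this assumption, generates infinitely many integral representations of $15n+8$, contradicting the finiteness of $r(15n+8,g_2)$.

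First I would introduce the candidate isometry. Let $V$ be the ternary quadratic space of $g_2(x,y,z)=2x^2+8y^2+15z^2-2xy$, and set
$$T=\begin{pmatrix} 1 & -1/3 & 2/3 \\ 0 & 1/3 & 4/3 \\ 0 & -2/3 & 1/3\end{pmatrix}.$$
A direct expansion verifies the identity
$$g_2\l(x-\frac{y-2z}3,\ \frac{y+4z}3,\ \frac{z-2y}3\r)=g_2(x,y,z),$$
so $T\in O(V)$. Two features of $T$ are crucial: it fixes $(1,0,0)$ and has $\det T=1$; and each coordinate of $T(x,y,z)$ differs from an integer by a multiple of $(y+z)/3$, so that $T(x,y,z)\in\Z^3$ if and only if $y+z\equiv0\pmod3$. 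Thus the congruence singled out in the statement is precisely the integrality condition for $T$.

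Next I would pin down the fixed line. The lower-right $2\times2$ block of $T$ has trace $2/3$ and determinant $1$, hence eigenvalues $\tfrac13(1\pm2\sqrt{-2})$, which lie on the unit circle but are not roots of unity (their sum $2/3$ is not an algebraic integer); so $T$ has infinite order. By Lemma \ref{Lem5.1} the space $V_T$ is one-dimensional, and since $T(1,0,0)=(1,0,0)$ we get $V_T=\{(t,0,0):t\in\Q\}$. Now run the orbit argument: the hypothesis hands us a representation $15n+8=g_2(x_0,y_0,z_0)$ with $y_0^2+z_0^2\ne0$, which is exactly the statement that $(x_0,y_0,z_0)\notin V_T$. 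Under the contradiction hypothesis every integral representation satisfies $y+z\equiv0\pmod3$, so $T$ carries integral representations to integral representations; inductively the forward orbit $\{T^k(x_0,y_0,z_0):k\gs0\}$ consists entirely of integral solutions of $g_2=15n+8$. Since $(x_0,y_0,z_0)\notin V_T$ and $T$ has infinite order, these points are pairwise distinct (any coincidence $T^i=T^j$ would force $(x_0,y_0,z_0)\in V_T$), giving infinitely many representations of $15n+8$ by the positive definite form $g_2$ — the contradiction we sought.

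The main obstacle is the construction of $T$ itself: one must find an isometry of $g_2$ whose only denominator is $3$ and whose integrality is governed by exactly the single congruence $y+z\equiv0\pmod3$, while simultaneously fixing a coordinate axis so that $V_T$ becomes the locus $y=z=0$. Concretely this amounts to locating a norm-one element of denominator $3$ in $\Q(\sqrt{-2})$, the field attached to the restriction $30s^2+15r^2$ of $g_2$ to $(1,0,0)^{\perp}$, and then checking that the resulting automorph does not acquire a spurious factor of $2$ in its first coordinate — a genuine risk, since $(1,0,0)$ has even norm $2$ and its orthogonal complement does not split off $\Z^3$ integrally. Once $T$ is in hand, verifying the identity, the infinite order, and the shape of $V_T$ is routine, and the orbit argument concludes the proof.
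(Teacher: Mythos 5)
Your proposal is correct and follows essentially the same route as the paper's proof: assume every representation satisfies $y+z\equiv0\pmod3$, exhibit an infinite-order automorph of $g_2$ with denominator $3$ fixing the line $\{(t,0,0)\}$, and derive a contradiction from the infinitude of the orbit via Lemma \ref{Lem5.1}. Your matrix $T$ differs from the paper's (which uses first row $(1,-2/3,-2/3)$ and the opposite sign on the lower block), but both are valid isometries satisfying the same integrality criterion $3\mid y+z$, so the arguments are interchangeable.
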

\begin{proof}
Suppose that every integral solution of the equation $g_2(x,y,z)=15n+8$ satisfies
$y+z\equiv 0\pmod 3$. We want to deduce a contradiction.

Let
$$T=\begin{pmatrix} 1 & -2/3 &-2/3 \\0 & -1/3 &-4/3 \\ 0 & 2/3 &-1/3 \end{pmatrix}$$
and let $V$ be the quadratic space corresponding to $g_2$.
Since
\begin{equation*}
g_2\l(x+\frac{-2y-2z}{3},\ \frac{-y-4z}{3},\ \frac{2y-z}{3}\r)=g_2(x,y,z).
\end{equation*}
We have $T\in O(V)$. One may
easily verify that the order of $T$ is infinite and the space $V_T$ defined  in Lemma \ref{Lem5.1}
coincides with $\{(t,0,0): t\in\Q\}$.
By the assumption in the lemma, we have $15n+8=g_2(x_0,y_0,z_0)$ for some $(x_0,y_0,z_0)\in\Z^3\sm V_T$.
Note that the set $\{T^k(x_0,y_0,z_0): k\gs0\}$ is infinite and all elements of this set are solutions to the equation $g_2(x,y,z)=15n+8$.
This leads to a contradiction since the number of integral representations of any integer by a positive
quadratic forms is finite.
\end{proof}

\noindent{\it Proof of Theorem} 1.4(iii). (a) Let $r\in\{8,11,14\}$. It is easy to see that $15n+r$ can be represented by
$f_1(x,y,z)=3x^2+5y^2+6z^2$ locally. There are two classes in the genus of $f_1$, and the one
not containing $f_1$ has a representative $f_2(x,y,z)=2x^2+6y^2+9z^2+6yz$.

Suppose that $15n+r=f_2(x,y,z)$ for some $x,y,z\in\Z$. Then $3\nmid x$.

{\it Case} 1. $3\nmid y$.

In this case, without loss of generality we may assume that $x\equiv -y\pmod 3$ (otherwise we may replace
$x$ by $-x$). In view of the identity
\begin{equation}\label{5.11}
f_1\l(\frac{2x-y}{3}-z,\ -y,\ \frac{x+y}{3}+z\r)=f_2(x,y,z),
\end{equation}
there are $x^*,y^*,z^*\in\Z$ such that $15n+\delta=f_1(x^*,y^*,z^*)$.

{\it Case} 2. $y^2+z^2\ne0$ and $3\mid y$.

When this occurs, by Lemma \ref{Lem5.3} we may simply assume that $3\nmid z$. With the help of the identity
\begin{equation*}
f_2(x,\ y+z,\ -z)=f_2(x,y,z),
\end{equation*}
we return to Case 1.

{\it Case} 3. $15n+r=2m^2$ for some $m\in\N$.

When $15n+r=2m^2=2\times 2^{2k}$ with $k\gs1$, we have
\begin{equation*}
15n+r=2\times 2^{2k}=3\times(2^{k-1})^2+5\times(2^{k-1})^2+6\times 0^2.
\end{equation*}

Now suppose that $m$ has a prime factor $p>5$. By Lemma \ref{Lem2.3} we have
\begin{equation}\label{5.12}
r(2p^2,f_1)+r(2p^2,f_2)=2\l(p+1-\left(\frac{-5}{p}\right)\r)>10.
\end{equation}
Clearly $r(2p^2,f_1)>5$ or $r(2p^2,f_2)>5$. When $r(2p^2,f_1)>5$,
the number $2m^2$ can be represented by $f_1$ over $\Z$ since $r(2m^2,f_1)\gs r(2p^2,f_1)$. When $r(2p^2,f_2)>5$, there are $u,v,w\in\Z$
with $v^2+w^2\ne0$ such that $f_2(u,v,w)=15n+r$. By Lemma \ref{Lem5.3}, we return to Case 1 or Case 2.

In view of the above, by applying Lemma \ref{Lem2.1} we get the $(15,r)$-universality of $3x^2+5y^2+6z^2$.

(b) It is easy to see that $15n+8$ can be represented by the genus of $g_1(x,y,z)=3x^2+5y^2+15z^2$.
There are two classes in the genus of $g_1$, and the one not containing $g_1$ has a representative
$g_2(x,y,z)=2x^2+8y^2+15z^2-2xy$.

Suppose that the equation $15n+8=g_2(x,y,z)$ is solvable over $\Z$.
We claim that there are $u,v,w\in\Z$ with $(u+w)(u-v-w)\equiv 0\pmod 3$ such that $15n+8=g_2(u,v,w)$.

{\it Case} 1. $3\mid x$.

Clearly, $3\nmid y$. If $3\nmid z$, without loss of generality we may assume that
$z\equiv -y\pmod 3$ (otherwise we replace $z$ by $-z$). Then $(u,v,w)=(x,y,z)$ meets our purpose.

{\it Case} 2. $3\nmid x$ and $y^2+z^2\ne0$.

In this case, by Lemma \ref{Lem5.5} there are $x',y',z'\in\Z$ with $3\nmid y'+z'$ such that
$15n+8=g_2(x',y',z')$. If $x'\equiv y'\pmod 3$, then by using the identity
\begin{equation*}
g_2(x-y,\ -y,\ z)=g_2(x,y,z),
\end{equation*}
we return to Case 1. If $x'\not\equiv y'\pmod 3$, then $3\mid y'$ and $3\nmid z'$ since $(x'+y')^2\equiv 1\pmod 3$
and $3\nmid x'$. Without loss of generality, we may assume that
$x'\equiv -z'\pmod 3$. So $(u,v,w)=(x',y',z')$ meets our purpose.

{\it Case} 3. $15n+8=2m^2$ with $m\in\N$.

If $m=2^k$ for some $k\in\Z^+$, then
\begin{equation*}
2m^2=3\times (2^{k-1})^2+5\times (2^{k-1})^2+15\times 0^2.
\end{equation*}

Now suppose that $m$ has a prime factor $p>5$. By Lemma \ref{Lem2.3}, we have
\begin{equation}\label{5.13}
r(2p^2,g_1)+r(2p^2,g_2)=2\l(p+1-\left(\frac{-2}{p}\right)\r)>10.
\end{equation}
Clearly, $r(2p^2,g_1)>5$ or $r(2p^2,g_2)>5$. When $r(2p^2,g_1)>5$,
we have $r(2m^2,g_1)\ge r(2p^2,g_1)>5$. If $r(2p^2,g_2)>5$, then there exist $x_0,y_0,z_0\in\Z$
with $y_0^2+z_0^2\ne 0$ such that $15n+8=g_2(x_0,y_0,z_0)$. So we are reduced to previous cases.

In view of the proved claim, the $(15,8)$-universality of $g_1$ follows from Lemma \ref{Lem2.1} and the identities
\begin{align*}
g_1\l(\frac{x-5z}{3}-y,\ -y+z,\ -\frac{x+z}{3}\r)&=g_2(x,y,z),\\
g_1\l(\frac{x-y-z}{3}+y+2z,\ y-z,\ \frac{x-y-z}{3}\r)&=g_2(x,y,z).
\end{align*}

(c) One may easily verify that $15n+8$ can be represented by $h_1(x,y,z)=3x^2+5y^2+30z^2$
locally. There are two classes in the genus of $h_1$, and the one not containing $h_1$ has a
representative $h_2(x,y,z)=2x^2+15y^2+15z^2$.

Suppose that the equation $15n+8=h_2(x,y,z)$ for some $x,y,z\in\Z$.
In light of Lemma \ref{Lem5.4}, we may assume $5\nmid y$ if $y^2+z^2>0$.
We claim that there are $u,v,w\in\Z$ with $(u-v+2w)(u-2v+w)\eq0\pmod5$ such that
$15n+8=h_2(u,v,w)$. This holds trivially if $y^2+z^2>0$ and $5\mid z$. Below we discuss the remaining cases.

{\it Case} 1. $y^2\equiv \ve z^2\pmod 5$ and $y^2+z^2\ne0$, where $\ve\in\{\pm1\}$.

 If $y^2\equiv z^2\equiv x^2\pmod 5$, then we may assume that $x\equiv y\equiv z\pmod 5$. If $y^2\equiv z^2\equiv -x^2\pmod 5$, without loss
of generality we may assume that $x\equiv -2y\equiv 2z\pmod 5$. So, $(u,v,w)=(x,y,z)$ meets our requirement in the case $\ve=1$. The case $\ve=-1$ can be handled similarly.

{\it Case} 2. $15n+8$ is twice a square, say $2m^2$ with $m\in\Z^+$.

When $m=2^k$ with $k\in\Z^+$, we have
\begin{equation*}
15n+8=2\times 2^{2k}=3\times (2^{k-1})^2+5\times (2^{k-1})^2+30\times 0^2.
\end{equation*}

Now assume that $m$ has a prime factor $p>5$. By Lemma \ref{Lem2.3}, we have
\begin{equation}
2r(2p^2,h_1)+r(2p^2,h_2)=2\l(p+1-\left(\frac{-1}{p}\right)\r)>10.
\end{equation}
Clearly $r(2p^2,h_1)\ge 4$ or $r(2p^2,h_2)\ge 4$. If $r(2p^2,h_1)\ge 4$,
then $r(2m^2,h_1)\ge r(2p^2,h_1)\ge 4$. When $r(2p^2,h_2)\ge 4$,
there exist $u,v,w\in\Z$ with $v^2+w^2\ne 0$ such that $h_2(u,v,w)=2m^2$.
Thus we are reduced to Case 1.

In view of the proved claim and the identities

\begin{align*}h_2(x,y,z)=&
h_1\l(2y+z,\ \frac{2x+3y-z}{5}-z,\ \frac{x-y+2z}{5}\r)
\\=&h_1\l(y+2z,\ \frac{2x+y-3z}{5}+y,\ \frac{x-2y+z}{5}\r),
\end{align*}
by applying Lemma \ref{Lem2.1} we obtain the $(15,8)$-universality of $3x^2+5y^2+30z^2$. \qed

\end{document}